\newcommand{\q}{\quad}
\newcommand{\ee}{{\rm e}\hspace{1pt}}
\journalname{BIT}
\begin{document}

\title{Efficient exponential Runge--Kutta methods of high order: construction and implementation
\thanks{This work has been supported in part by National Science Foundation through award NSF DMS--2012022. 
}
}

\titlerunning{Efficient exponential Runge--Kutta methods of high order}        

\author{Vu Thai Luan\\
\\
\textnormal{
 \centerline{\emph{Dedicated to Professor Alexander Ostermann on the occasion of his 60th birthday.}}
}
}
\authorrunning{Vu Thai Luan} 

\institute{Vu Thai Luan \at
             Department of Mathematics and Statistics, Mississippi State University \\
              410 Allen Hall, 175 President's Circle,
Mississippi State, MS, 39762\\
              \email{luan@math.msstate.edu}           
}

\date{
}

\maketitle

\begin{abstract}
Exponential Runge--Kutta methods have shown to be competitive for the time integration of stiff semilinear parabolic PDEs.  The current construction of stiffly accurate exponential Runge--Kutta methods, however, relies on a convergence result that requires weakening many of the order conditions, resulting in schemes whose stages must be implemented in a sequential way. 
In this work, after showing a stronger convergence result, we are able to derive two new families of fourth- and fifth-order exponential Runge--Kutta methods, which, in contrast to the existing methods, have multiple stages that are independent of one another and share the same format, thereby allowing them to be implemented in parallel or simultaneously, and making the methods to behave like using with much less stages. Moreover, all of their stages involve only one linear combination of the product of $\varphi$-functions (using the same argument) with vectors.
Overall, these features make these new methods to be much more efficient to implement when compared to the existing methods of the same orders. Numerical experiments on a one-dimensional semilinear parabolic problem, a
nonlinear Schrödinger equation, 
and a two-dimensional Gray--Scott model are given  to confirm the accuracy and efficiency of the two newly constructed methods.

\keywords{Exponential Runge--Kutta methods \and  Exponential integrators \and Stiff PDEs \and Efficient implementation}
\subclass{MSC 65L04 \and MSC  65M06  \and MSC 65N12 }
\end{abstract}
\section{Introduction}
\label{intro}
In this paper, we are concerned with the construction and implementation of new efficient exponential Runge--Kutta  integrators for solving stiff parabolic PDEs. These PDEs, upon their spatial discretizations, can be cast in the form of semilinear problems
\begin{equation} \label{eq1.1}
u'(t)= A u(t) + g(t, u(t))=F(t, u(t)), \qquad u(t_0)=u_0,
\end{equation}
where the linear part $Au$ usually causes stiffness. The nonlinearity $g(t,u)$ is assumed to satisfy a local Lipschitz condition 
 in a strip along the exact solution.

Exponential Runge--Kutta methods are a popular class of exponential integrators \cite{HO10}, which have shown a great promise as an alternative to standard time integration solvers for  stiff systems and applications in recent years, see e.g. \cite{HO05,LO12b,LO14b,LO13,LO14a,Luan2014,LO16,L17,Michels2017,Ju2017,LD18,Luan18,Pieper2019a}.
The main idea behind these methods is to solve the linear portion of \eqref{eq1.1}  exactly and integrate the remaining nonlinear portion explicitly based on a representation of the exact solution using the variation-of-constants formula.

A $s$-stage explicit exponential Runge--Kutta  (expRK) method \cite{HO05} applied to  \eqref{eq1.1} can be reformulated (see \cite{LO12b,LO14b}) as 
\begin{subequations} \label{eq:expRK}
\begin{align}
 U_{ni}&= u_n + c_i h \varphi _{1} ( c_i h A)F(t_n, u_n) +
 h \sum_{j=2}^{i-1}a_{ij}(h A) D_{nj}, \  2\leq i\leq s,  \label{eq2.1a} \\
u_{n+1}& = u_n + h \varphi _{1} ( h A)F(t_n, u_n) + h \sum_{i=2}^{s}b_{i}(h A) D_{ni}  \label{eq2.1b},
\end{align}
\end{subequations}
where 
\begin{equation} \label{eq1.2}
 D_{ni}= g (t_n+c_i h, U_{ni})- g(t_n, u_n ), \qquad  2\leq i\leq s.
\end{equation}
Here, $U_{ni}$ denote the internal stages that approximate $u(t_n+c_i h)$ using the time step size $h = t_{n+1}-t_n >0$ and nodes $c_i $.  By construction, the coefficients $a_{ij}(z)$ and $b_i (z)$ are usually linear combinations of the entire functions  
\begin{equation} \label{eq1.3}
\varphi_{k}(z)=\int_{0}^{1} \ee^{(1-\theta )z} \frac{\theta ^{k-1}}{(k-1)!}\,\text{d}\theta , \quad k\geq 1
\end{equation}
and their scaled versions $\varphi_{k}(c_i z)$.

A common approach that has been used to determine the unknown matrix functions $a_{ij}(h A)$ and $b_{i}(h A)$ is to expand them as 
$a_{ij}(h A)=\sum_{k\ge 0} \alpha^{(k)}_{ij} (hA)^k $, \ 
$b_{i}(h A)=\sum_{k\ge 0} \beta^{(k)}_{i} (hA)^k
 $
(e.g. using classical Taylor series expansions) to obtain order conditions. Clearly, the boundedness of the remainder terms of these expansions (and thus the error terms) are dependent of $\|A\|$.  Due to stability reasons, such resulting methods  might not be suitable for integrating stiff PDEs, which $A$ typically has a large norm or is even unbounded operator. These methods are thus usually referred as classical (non-stiffly accurate) expRK methods.  
 Unlike this approach, in a seminal contribution \cite{HO05}, Hochbruck and Ostermann derived a new error expansion with the remainder terms that are bounded independently of the stiffness (i.e. not involving the powers of $A$), leading to stiff order conditions, which give rise to the construction of stiffly accurate expRK methods of orders up to four. Following this, in \cite{LO13} Luan and Ostermann developed a systematic theory of deriving stiff order conditions for expRK methods of arbitrary order, thereby allowing the construction of a  fifth-order method in \cite{LO14b}.

In view of the existing stiffly accurate expRK methods in the literature, 
we observe that they were derived  based on a convergence result that requires weakening many of the stiff order conditions (in order to minimize the number of required stages $s$ and matrix functions used in each internal stages $U_{ni}$). As a result, their structures contain internal stages $U_{ni}$ that are dependent of the preceding stages, implying that such methods must be implemented by computing each of these stages sequentially.  Also, the very last stages usually involve several different linear combinations of  $\varphi_{k}(c_i hA)$-functions (using different nodes $c_i$ in their arguments) acting on different sets of vectors.
This would introduce additional computational effort for these stages. For more details, we refer to Sections~\ref{sec2} and \ref{sec:implementation}.

Motivated by the observations above, in this work we show a stronger convergence result for expRK methods up to order five which requires weakening only one order condition (thereby could improve the stability and accuracy) and offers more degree of freedoms in solving order conditions. Using this result and inspired by our recent algorithm, $\mathtt{phipm\_simul\_iom}$, proposed in \cite{Luan18} (which allows one to simultaneously compute multiple linear combinations of $\varphi$- functions acting on a same set of vectors), we construct new methods of orders 4 and 5 which involve only one linear combination of $\varphi$- functions for each stage and have multiple internal stages $U_{ni}$ that are independent of one another, thereby allowing them to be computed in parallel. Furthermore, one can derive these independent stages in a way that they share the same form of linear combination of $\varphi_{k}(c_i hA)$- functions acting on the same set of vectors, allowing them to be implemented simultaneously (by one evaluation). 
While these independent states  can be computed in parallel (as mentioned above) by any algorithm which approximates the action of (the linear combination of) $\varphi$- functions, we note that the possibility to compute them simultaneously is a new feature that can be used with our algorithm $\mathtt{phipm\_simul\_iom}$ (other algorithms, e.g., that do not require the construction of Krylov subspaces, might not support computing these stages simultaneously).
Overall,  this makes the new methods to behave like methods using much less number of stages (even when compared to the existing methods of the same orders), meaning that they require much less number of evaluations for linear combinations of $\varphi$- functions, and are thus more efficient.

The paper is organized as follows. In Section~\ref{sec2}, we describe our motivation, propose new ideas, and review the existing expRK methods in the literature with respect to these ideas. Following this, in Section~\ref{sec3} we prove a stronger convergence result (Theorem~\ref{th3.1}) for expRK methods, which requires relaxing only one order condition.
This allows us to construct more efficient methods  in Section~\ref{sec4}. In particular, we are able to derive two new families of fourth- and fifth- order stiffly accurate expRK methods called  $\mathtt{expRK4s6}$ (4th-order 6-stage but requires 4 independent stage evaluation only) and  $\mathtt{expRK5s10}$ (5th-order 10-stage but requires 5 independent stage evaluation only), respectively. In Section~\ref{sec:implementation}, we present details implementation of these two new methods, as well as the existing stiffly accurate expRK schemes of the same orders (for comparison). In the last section, numerical examples including one and two-dimensional stiff PDEs are presented to demonstrate the accuracy and efficiency of the two newly constructed expRK integrators.

\section{Motivation and existing methods} \label{sec2}
In this section, we start our motivation by taking a closer look at an efficient way for implementing expRK methods  \eqref{eq:expRK}. Then, we propose some ideas to derive more efficient methods with respect to this efficient implementation along with reviewing the current methods. 
\subsection{An efficient way of implementation} \label{sec2.1}
Clearly, each stage ($U_{ni}$ or $u_{n+1}$) of \eqref{eq:expRK} requires computing matrix functions of the form $\varphi_k(c_i h A)v_k$ ($0 < c_i \le 1$), where $v_k$ is some vector (could be $F(t_n, u_n), D_{ni}$ or a linear combination of these). 
Thanks to recent developments \cite{AH11,NW12,CKOS16,GaudreaultPudykiewicz16}, one can efficiently compute a linear combination of  $\varphi$-functions acting on a set of input vectors $V_0,\ldots,V_q$
\begin{equation} \label{eq2.1}
\varphi_0 (M)V_0 + \varphi_1 (M)V_1 +\varphi_2 (M)V_2+\cdots+\varphi_{q}(M)V_{q},
\end{equation}
where $M$ is some square matrix. This is crucial when implementing exponential integrators. Very recently, in \cite{Luan18}, we were able to improve the implementations presented in \cite{NW12,GaudreaultPudykiewicz16}, resulting in the routine $\mathtt{phipm\_simul\_iom}$.
The underlying method in this algorithm is the use of an adaptive time-stepping technique combined with Krylov subspace methods, which allows us to simultaneously compute multiple linear combinations of type \eqref{eq2.1} using different scaling factors $\rho_1, \cdots, \rho_r$ of $M$, i.e.,
\begin{equation} \label{eq2.2}
\begin{aligned}
\varphi_0 (\rho_1 M)V_0 + \varphi_1 (\rho_1 M)V_1 &+\varphi_2 (\rho_1 M)V_2+\cdots+\varphi_{N}(\rho_1 M)V_{q},\\
& \vdots\\
\varphi_0 (\rho_r M)V_0 + \varphi_1 (\rho_r M)V_1 &+\varphi_2 (\rho_r M)V_2+\cdots+\varphi_{N}(\rho_r M)V_{q}.
\end{aligned}
\end{equation}
Now taking $M=hA$ and considering  $\rho_k$  ($1 \le k \le r$)  as nodes $c_i$ used in expRK methods immediately suggests that one can compute the following ($s-1$) linear linear combinations 
\begin{equation}
    \label{eq:phi_linear_combination}
\varphi_1(c_i h A)V_1 + \varphi_2(c_i h A)V_2+ \ldots + \varphi_N(c_i h A) V_q, \q 2 \le i \le s
\end{equation}
simultaneously 
by using only one evaluation (i.e., one call to $\mathtt{phipm\_simul\_iom}$). Note that this requires the use of a \emph{same} set of  vectors $[V_1,\ldots,V_q ]$ for all the linear combinations in~\eqref{eq:phi_linear_combination}.

Motivated by this, we see that if a $s$-stage expRK scheme  \eqref{eq:expRK}  is constructed in such a way that each internal stage $U_{ni}$ has the form
\begin{equation}
    \label{eq:Uni_linear_combination}
U_{ni}=u_n+ \varphi_1(c_i h A)V_{1i} + \varphi_2(c_i h A)V_{2i}+ \ldots + \varphi_N(c_i h A) V_{qi},
\end{equation}
which includes only one linear combination of $\varphi$- functions using \emph{exactly} node $c_i$ as an argument in all $\varphi_k$ functions, then the scheme will contain a total of $s$ such linear combinations ($s-1$  for $U_{ni}$ and 1 for $u_{n+1}$ as \eqref{eq2.1b} can be always written in the form of \eqref{eq:Uni_linear_combination} with $c_i=1$), thereby requiring $s$ evaluations only. Furthermore, since the sets of vectors $[V_{1i}, V_{2i}, \cdots, V_{qi}]$ in  \eqref{eq:Uni_linear_combination} are usually different for each $U_{ni}$, \eqref{eq:phi_linear_combination} also suggests that the efficiency  will be significantly increased if one could build such stages (or a group of)  $U_{ni}$ of the form \eqref{eq:Uni_linear_combination} that share the same format
(i.e., having the same set of acting vectors $V_{1i} \equiv V_1,\ldots,V_{qi} \equiv V_q$) or that are independent of one another. 
As this allows to compute such stages simultaneously by one evaluation or to implement them in parallel similarly to our construction of parallel exponential Rosenbrock methods \cite{LO16}), it certainly reduces the total number of required evaluations and thus speedups the computing time.

With respect to these observations, we now review the existing expRK schemes in the literature. Since our focus is on stiff problems, we will discuss only on stiffly accurate expRK methods, meaning that they satisfy the stiff order conditions (see  Section~\ref{sec3} below).
\subsection{Existing schemes and remarks} \label{sec2.2}
In \cite{HO05},  expRK methods of orders up to four have been derived. 
For later reference, we name the second-order, the third-order, and the fourth-order methods in that work  as $\mathtt{expRK2s2}$,  $\mathtt{expRK3s3}$, and $\mathtt{expRK4s5}$, respectively.
In \cite{LO14b},  we have constructed an expRK method of order five called $\mathtt{expRK5s8}$. To discuss all of these schemes in terms of the implementation, we rewrite their internal stages $U_{ni}$ and $u_{n+1}$ as linear combinations of $\varphi$- functions like \eqref{eq:Uni_linear_combination} and display them as follows (Note that since the first-order method, the exponential Euler scheme
$
u_{n+1}= u_n +  \varphi_1 ( h A) hF(t_n,u_n),
$
has no internal stage, we do not consider it here).
 
\noindent $\mathtt{expRK2s2}$: 
\begin{equation}\label{eq:expRK2s2}
\begin{aligned}
U_{n2}=u_n &+\varphi_1 (c_2 h A)  c_2 hF(t_n,u_n), \\
u_{n+1}= u_n &+  \varphi_1 ( h A) hF(t_n,u_n) + \varphi_2 (h A)\tfrac{1}{c_2} h D_{n2}.
\end{aligned}
\end{equation} 
\noindent $\mathtt{expRK3s3}$  (a representative with $c_2 \ne \tfrac{2}{3}$):
\begin{equation}\label{eq:expRK3s3}
\begin{aligned}
U_{n2}=u_n &+\varphi_1 (c_2 h A)  c_2 hF(t_n,u_n), \\
U_{n3}=u_n &+ \varphi_1 (\tfrac{2}{3} h A)\tfrac{2}{3} hF(t_n, u_n)+ \varphi_2 (\tfrac{2}{3} h A) \tfrac{4}{9 c_2}h D_{n2}, \\
u_{n+1}= u_n &+  \varphi_1 ( h A) hF(t_n,u_n) + \varphi_2 (h A)\tfrac{3}{2} h D_{n2}.
\end{aligned}
\end{equation} 
\noindent $\mathtt{expRK4s5}$ (the only existing fourth-order stiffly accurate expRK method constructed by Hochbruck and Ostermann \cite{HO05}):
\begin{equation}\label{eq:expRK4s5}
\begin{aligned}
U_{n2}=u_n &+\varphi_1 (\tfrac{1}{2} h A)  \tfrac{1}{2} hF(t_n,u_n), \\
U_{n3}=u_n &+ \varphi_1 (\tfrac{1}{2} h A)\tfrac{1}{2} hF(t_n, u_n)+ \varphi_2 (\tfrac{1}{2} h A) h D_{n2}, \\
U_{n4}=u_n &+  \varphi_1 ( h A) hF(t_n,u_n)+ \varphi_2 (h A) h(D_{n2} + D_{n3}),\\
U_{n5}=u_n &+ [\varphi_1 ( \tfrac{1}{2} h A) \tfrac{1}{2} hF(t_n, u_n)
                      + \varphi_2 ( \tfrac{1}{2} h A) \tfrac{1}{4} h (2 D_{n2} + 2 D_{n3} - D_{n4})\\
                     & + \varphi_3 ( \tfrac{1}{2} h A) \tfrac{1}{2} h (-D_{n2}  - D_{n3} + D_{n4})]
                        + [\varphi_2 (h A) \tfrac{1}{4} h (D_{n2} + D_{n3} - D_{n4})\\
                     &+  \varphi_3 (h A)  h (-D_{n2} - D_{n3} + D_{n4})],\\
u_{n+1}= u_n &+  \varphi_1 ( h A) hF(t_n,u_n) +  \varphi_2 (h A) h (-D_{n4}+4 D_{n5})+  \varphi_3 (h A) h (4D_{n4} -8 D_{n5}).                    
\end{aligned}
\end{equation} 

\noindent $\mathtt{expRK5s8}$ (the only existing fifth-order stiffly accurate expRK method constructed by Luan and Ostermann \cite{LO14b}):
\begin{equation}\label{eq:expRK5s8}
\begin{aligned} \notag
U_{n2}=u_n &+\varphi_1 (\tfrac{1}{2} h A)  \tfrac{1}{2} hF(t_n,u_n), \\
U_{n3}=u_n &+ \varphi_1 (\tfrac{1}{2} h A)\tfrac{1}{2} hF(t_n, u_n)+ \varphi_2 (\tfrac{1}{2} h A)  \tfrac{1}{2} h D_{n2}, \\
U_{n4}=u_n &+  \varphi_1 ( \tfrac{1}{4} h A)  \tfrac{1}{4} hF(t_n,u_n)+ \varphi_2 ( \tfrac{1}{4} h A) \tfrac{1}{8}  h D_{n3},\\
U_{n5}=u_n &+ \varphi_1 ( \tfrac{1}{2} h A) \tfrac{1}{2} hF(t_n, u_n)
                      + \varphi_2 ( \tfrac{1}{2} h A) \tfrac{1}{2} h (- D_{n3} + 4D_{n4})
                      + \varphi_3 ( \tfrac{1}{2} h A) h (2D_{n3} - 4D_{n4})\\
U_{n6}=u_n &+ \varphi_1 ( \tfrac{1}{5} h A) \tfrac{1}{5} hF(t_n, u_n)
                      + \varphi_2 ( \tfrac{1}{5} h A) \tfrac{1}{25} h (8D_{n4} - 2D_{n5})
                      + \varphi_3 ( \tfrac{1}{5} h A)  \tfrac{1}{125}h (-32 D_{n4} + 16 D_{n5}),\\
U_{n7}=u_n &+[ \varphi_1 ( \tfrac{2}{3} h A) \tfrac{2}{3} hF(t_n, u_n)
                      + \varphi_2 ( \tfrac{2}{3} h A) h (\tfrac{-16}{27} D_{n5} +\tfrac{100}{27}  D_{n6})
                      + \varphi_3 ( \tfrac{2}{3} h A) h (\tfrac{320}{81} D_{n5} -\tfrac{800}{81}  D_{n6})]\\
                   &+[\varphi_2 ( \tfrac{1}{5} h A) h (\tfrac{-20}{81} D_{n4} +\tfrac{5}{243}  D_{n5}+\tfrac{125}{486}  D_{n6})
                     +\varphi_3 ( \tfrac{1}{5} h A)  h (\tfrac{16}{81} D_{n4} -\tfrac{4}{243}  D_{n5}-\tfrac{50}{243}  D_{n6})],  \\
U_{n8}=u_n &+\big[ \varphi_1 ( h A) hF(t_n, u_n)
                      + \varphi_2 ( h A) h (\tfrac{-16}{3} D_{n5} +\tfrac{250}{21}  D_{n6}+\tfrac{27}{14}  D_{n7})\\
                      &+ \varphi_3 ( h A) h (\tfrac{208}{3} D_{n5} -\tfrac{250}{3}  D_{n6}- 27 D_{n7})
                        + \varphi_4 ( h A) h (-240D_{n5} + \tfrac{1500}{7}  D_{n6}+  \tfrac{810}{7} D_{n7})\big]\\
                     &+ \big[\varphi_2 ( \tfrac{1}{5} h A) h (\tfrac{-4}{7} D_{n5} + \tfrac{25}{49}  D_{n6} +\tfrac{27}{98} D_{n7}) 
                     + \varphi_3 ( \tfrac{1}{5} h A) h (\tfrac{8}{5} D_{n5} - \tfrac{10}{7}  D_{n6} - \tfrac{27}{35} D_{n7})\\ 
                  &+\varphi_4 ( \tfrac{1}{5} h A) h (\tfrac{-48}{35} D_{n5} + \tfrac{60}{49}  D_{n6} + \tfrac{162}{245} D_{n7})\big]\\
                  &+ \big[\varphi_2 ( \tfrac{2}{3} h A) h (\tfrac{-288}{35} D_{n5} + \tfrac{360}{49}  D_{n6} +\tfrac{972}{245} D_{n7}) 
                     + \varphi_3 ( \tfrac{2}{3} h A) h (\tfrac{384}{5} D_{n5} - \tfrac{480}{7}  D_{n6} - \tfrac{1296}{35} D_{n7})\\ 
                  &+\varphi_4 ( \tfrac{2}{3} h A) h (\tfrac{-1536}{7} D_{n5} + \tfrac{9600}{49}  D_{n6} + \tfrac{5184}{49} D_{n7})\big],\\
u_{n+1}= u_n &+  \varphi_1 ( h A) hF(t_n,u_n) +  \varphi_2 (h A) h ( \tfrac{125}{14} D_{n6} -\tfrac{27}{14} D_{n7} +  \tfrac{1}{2} D_{n8}) \\
  &+  \varphi_3 (h A) h ( \tfrac{-625}{14} D_{n6} + \tfrac{162}{7} D_{n7} -  \tfrac{13}{2} D_{n8})     
    +    \varphi_4 (h A) h ( \tfrac{1125}{14} D_{n6} -  \tfrac{405}{7} D_{n7}  +  \tfrac{45}{2} D_{n8}).                
\end{aligned}
\end{equation} 
\begin{remark}\label{remark2.1}
In view of the structures of these schemes, one can see that only the second- and third-oder schemes ($\mathtt{expRK2s2}$, $\mathtt{expRK3s3}$) have all 
 $U_{ni}$ in the form  \eqref{eq:Uni_linear_combination}. While $\mathtt{expRK2s2}$ requires one internal stage $U_{n2}$, $\mathtt{expRK3s3}$ needs two internal stages with $U_{n3}$ depends on  $U_{n2}$, making these stages cannot be computed simultaneously.
As for $\mathtt{expRK4s5}$, to the best of our knowledge, this 5-stage scheme is the only existing fourth-order stiffly accurate expRK method. As seen, among  its internal stages the three internal stages $U_{n2}$, $U_{n3}$, and $U_{n4}$ are of the form 
\eqref{eq:Uni_linear_combination} but again their corresponding sets of vectors $[V_{ki}]$ are not the same ($[V_{k2}]=[\tfrac{1}{2} hF(t_n,u_n)], [V_{k3}]=[\tfrac{1}{2} hF(t_n,u_n), hD_{n2} ],  [V_{k4}]=[hF(t_n,u_n), h(D_{n2}+D_{n3})]$),
 and because of \eqref{eq1.2}, they are not independent of one another ($U_{n4}$ and $U_{n3}$ depend on their preceding stages). Therefore one needs 3 sequential evaluations for computing these three stages. Also, we note that the last internal stage $U_{n5}$ depends on all of its preceding stages and involves two different linear combinations of $\varphi_k$- functions with different scaling factors $c_5=\tfrac{1}{2}$ and $c_4=1$, namely,  $\sum_{k} \varphi_k (\tfrac{1}{2} hA)V_k$ and $\sum_{k}\varphi_k (hA)W_k$ (grouped in two different brackets $[ \ ]$), which has to be implemented by 2 separate evaluations. The final stage $u_{n+1}$ depends on $U_{n4}$ and $U_{n5}$. As a result, this scheme must be implemented in a sequential way, which requires totally 6 evaluations for 6 different linear combinations.
Similarly, to the best of our knowledge, $\mathtt{expRK5s8}$ is also the only existing fifth-order stiffly accurate expRK methods. From the construction of this scheme  \cite{LO14b}, one needs 8 stages. Among them, the first five internal stages are of the form \eqref{eq:Uni_linear_combination}. We note, however, that the last two internal stages $U_{n7}$ and $U_{n8}$ involves 2 and 3 different linear combinations (grouped in different  brackets $[ \ ]$) of $\varphi_k$- functions (with different scaling factors) acting on different sets of vectors. And each stage ($U_{ni}$ or $u_{n+1}$) depends on all the preceding stages (except for the first stage $U_{n2}$). Thus, this scheme must be also implemented in a sequential way (it also does not have any group of internal stages that can be computed simultaneously). Clearly,  it requires totally  11 evaluations (11 different linear combinations of $\varphi$ functions).
\end{remark}
\begin{remark}\label{remark2.2}
The resulting structures of the expRK schemes discussed in Remark~\ref{remark2.1} can be explained by taking a closer look at their constructions presented in  \cite{HO05,LO14b}. Namely, these methods have been analyzed and derived by using a weakened convergence result, i.e., weakening of many order conditions  in order to minimize the number of required stages $s$ and the number of matrix functions in each internal stage $U_{ni}$.
Specifically, for fourth-order methods (e.g., $\mathtt{expRK4s5}$)  4  out of 9 order conditions have to be relaxed  and for fifth-order methods (e.g., $\mathtt{expRK5s8}$) 9  out of 16 order conditions have to be relaxed. As a trade off, each stage of these methods depends on the preceding stages (thus  the resulting schemes must be implemented by computing each stage sequentially) and the very last stages usually involve different linear combinations of  $\varphi_{k}$-functions (with several different nodes $c_i$ as scaling factors) acting on not the same set of vectors, which then require additional sequential evaluations. For more details, see Section~\ref{sec4} below.
 \end{remark}

\section{Stiff order conditions and convergence analysis}\label{sec3}
Inspired by the motivation and remarks in Section~\ref{sec2}, we next present a stronger convergence result which later allows a construction of new efficient methods of high order. For this, we first recall the stiff order conditions for expRK methods up to order 5  (see \cite{LO12b,LO14b}).
\subsection{Stiff order conditions for methods up to order 5}\label{sec3.1}
Let $\tilde{e}_{n+1}=\hat{u}_{n+1}- u(t_{n+1})$ denote the local error of \eqref{eq:expRK},  i.e., the difference between the numerical solution $\hat{u}_{n+1}$ obtained by \eqref{eq:expRK} after one step starting from the `initial condition' $u(t_n)$ and the corresponding exact solution $u(t_{n+1})$ of \eqref{eq1.1} at $t_{n+1}$. 

To simplify the notation in this section we set $f(t)=g(t, u(t))$  as done in \cite{HO05}, and additionally denote
 $G_{k,n}=D^{k} g(t_n, u(t_n))$ be the $k$-th partial Fr\'echet  derivative (with respect to $u$) evaluated at $u(t_n)$.
 Our results in \cite{LO12b} (Sect. 4.2) or \cite{LO13} (Sect. 5.1) showed that
\begin{equation}\label{eq3.1}
\begin{aligned}
\tilde{e}_{n+1}&=h^2\psi _{2} ( h A)\,f'(t_n)+ h^3\psi _{3} ( h A) \,f''(t_n)
+  h^4 \psi_{4} ( h A) \,f'''(t_n) + h^5 \psi _{5} ( h A) \,f^{(4)}(t_n) \\
&\quad +\mathbf{R}_n+ \mathcal{O}(h^6)
\end{aligned}
\end{equation}
with  the remaining terms
\begin{align} 
\mathbf{R}_n &= h^3 \sum_{i=2}^{s}b_{i} G_{1,n} \psi_{2,i} \,f'(t_n)
+ h^4 \sum_{i=2}^{s}b_{i} G_{1,n} \psi_{3,i} \,f''(t_n) 
+ h^4  \sum_{i=2}^{s}b_{i} G_{1,n} \sum_{j=2}^{i-1}a_{ij} G_{1,n} \psi_{2,j} \,f'(t_n)  \notag \\ 
& + h^4 \sum_{i=2}^{s}b_{i} c_i G_{2,n}\big(u'(t_n),
\psi_{2,i} \,f'(t_n) \big) + h^5 \sum_{i=2}^{s}b_{i} G_{1,n}\psi_{4,i}\,f'''(t_n) \notag  \\
& + h^5 \sum_{i=2}^{s}b_{i} G_{1,n} \sum_{j=2}^{i-1}a_{ij} G_{1,n} \psi_{3,j}\,f''(t_n) 
+ h^5 \sum_{i=2}^{s}b_{i} G_{1,n} \sum_{j=2}^{i-1}a_{ij} G_{1,n}\sum_{k=2}^{j-1}a_{jk}
G_{1,n} \psi_{2,k} \,f'(t_n) \label{eq3.2} \\
&+ h^5 \sum_{i=2}^{s}b_{i} G_{1,n} \sum_{j=2}^{i-1}a_{ij}c_j G_{2,n}\big(u'(t_n), \psi_{2,j}
\,f'(t_n)\big)  + h^5 \sum_{i=2}^{s}b_{i} c_i G_{2,n}\big(u'(t_n), \psi_{3,i} \,f''(t_n) \big) \notag  \\
& + h^5 \sum_{i=2}^{s}b_{i} c_i G_{2,n}\big(u'(t_n),\sum_{j=2}^{i-1}a_{ij} G_{1,n} \psi_{2,j}
\,f'(t_n) \big) 
+ h^5 \sum_{i=2}^{s} \frac{b_{i}}{2!} G_{2,n}\big(\psi_{2,i} \,f'(t_n),
\psi_{2,i} \,f'(t_n) \big) \notag  \\
& + h^5 \sum_{i=2}^{s}b_{i} \frac{c^2_i}{2!} G_{2,n}\big(u''(t_n),\psi_{2,i} \,f'(t_n)\big) 
+ h^5 \sum_{i=2}^{s}b_{i} \frac{c^2_i}{2!} G_{3,n}\big(u'(t_n),
u'(t_n),\psi_{2,i} \,f'(t_n) \big). \notag
\end{align}
Here, (and from now on) we use the abbreviations $a_{ij}=a_{ij}(hA)$, $b_{i}=b_{i}(hA), \varphi_{j,i}=\varphi_{j} (c_i hA)$ and 
\begin{subequations}\label{eq3.3}
\begin{align}
\psi_{j} (hA)&= \sum_{i=2}^{s}b_{i}\frac{c^{j-1}_{i}}{(j-1)!}- \varphi _{j} (hA), \q j\geq 2  \label{eq3.3a}\\
\psi_{j,i}&=\psi_{j,i} (hA)= \sum_{k=2}^{i-1}a_{ik}\frac{c^{j-1}_{k}}{(j-1)!}- c^{j}_{i}\varphi_{j,i}.
\label{eq3.3b}
\end{align}
\end{subequations}
Requiring a local error truncation  $ \tilde{e}_{n+1}=\mathcal{O}(h^6)$ results in the stiff order conditions for methods of order up to 5, which are displayed in Table~\ref{tb1} below.
{
\setlength{\extrarowheight}{1.4 pt}
\renewcommand{\arraystretch}{1.35}
\vspace{-3mm}
\begin{table}[ht]
\begin{center}
\caption{Stiff order conditions for explicit exponential Runge--Kutta methods up to order 5. The variables
$Z$, $J$, $K$, $L$ denote arbitrary square matrices, and $B$ an arbitrary bilinear mapping of appropriate
dimensions. The functions $\psi_l$ and $\psi_{k,l}$ are defined in \eqref{eq3.3}.}\label{tb1}
\normalsize
\vspace{1.4mm}
\begin{tabular}{ |c|c|c| }
\hline
{\ No.\ } & Stiff order condition & {\ Order\ } \\
\hline
1& $\psi_2(Z)=0 \Longleftrightarrow  \sum_{i=2}^{s}b_{i}(Z)c_i = \varphi_2(Z)$& 2 \\
 \hline
2& $\psi_3(Z)=0 \Longleftrightarrow \sum_{i=2}^{s}b_{i}(Z)\frac{c_i^2}{2!} = \varphi_3(Z)$&3 \\
3& $\sum_{i=2}^{s}b_{i}(Z) J \psi _{2,i}(Z)=0 $& 3 \\
 \hline
4&$\psi_4(Z)=0\Longleftrightarrow \sum_{i=2}^{s}b_{i}(Z)\frac{c_i^3}{3!} = \varphi_4(Z)$&4 \\
5& $\sum_{i=2}^{s}b_{i}(Z) J \psi _{3,i}(Z)=0 $& 4 \\
6& $\sum_{i=2}^{s}b_{i}(Z) J \sum_{j=2}^{i-1}a_{ij}(Z) J \psi_{2,j}(Z)=0 $& 4 \\
7 & $\sum_{i=2}^{s}b_{i}(Z) c_i K\psi_{2,i}(Z)=0 $& 4 \\
\hline
8&$\psi_5(Z)=0 \Longleftrightarrow \sum_{i=2}^{s}b_{i}(Z)\frac{c_i^4}{4!} = \varphi_5(Z)$& 5 \\
9&$ \sum_{i=2}^{s}b_{i}(Z) J \psi_{4,i}(Z)=0 $& 5 \\
10&$\sum_{i=2}^{s}b_{i}(Z) J \sum_{j=2}^{i-1}a_{ij}(Z) J \psi_{3,j}(Z)=0 $& 5 \\
11&\quad $\sum_{i=2}^{s}b_{i}(Z) J \sum_{j=2}^{i-1}a_{ij}(Z) J \sum_{k=2}^{j-1}a_{jk}(Z) J \psi_{2,k}(Z)=0\quad$& 5 \\
12&$\sum_{i=2}^{s}b_{i}(Z) J \sum_{j=2}^{i-1}a_{ij}(Z)c_j K \psi_{2,j}(Z) =0 $& 5 \\
13&$\sum_{i=2}^{s}b_{i}(Z) c_i K \psi_{3,i}(Z)=0 $& 5 \\
14&$\sum_{i=2}^{s}b_{i}(Z) c_i K \sum_{j=2}^{i-1}a_{ij}(Z) J \psi_{2,j}(Z) =0 $& 5 \\
15&$\sum_{i=2}^{s}b_{i}(Z)  B \big(\psi_{2,i}(Z) , \psi_{2,i}(Z) \big)=0 $& 5 \\
16& $\sum_{i=2}^{s}b_{i}(Z) c^2_i L \psi_{2,i}(Z)=0 $& 5 \\
\hline
\end{tabular}
\end{center}
\end{table}
}
\subsection{A stronger convergence result}
The convergence analysis of exponential Runge--Kutta methods is usually performed in the framework of analytic semigroups on a Banach space $X$ with the following assumptions (see e.g. \cite{HO05,LO14b}):

{\em Assumption 1. The linear operator $A$ is the infinitesimal generator of an analytic semigroup $\ee^{tA}$ on $X$}. This implies that 
\begin{equation} \label{eq:bound1}
\|\ee^{tA}\|_{X\leftarrow X}\leq C, \quad t\geq 0
\end{equation}
and consequently $\varphi_k(h A)$,  the coefficients $a_{ij}(h A)$ and $b_{i}(h A)$ of the method are bounded operators. Furthermore, the following stability bound (see  \cite[Lemma 1]{HO05})
\begin{equation} \label{eq:bound2}
\left \|h A \sum_{j=1}^{n}\ee^{j h A} \right\|_{X\leftarrow X}  \leq C
\end{equation}
holds uniformly for all $n\geq 1$ and $h>0$ with $0<nh\le T-t_0$.

{
\em Assumption 2 (for high-order methods). The solution $u:[t_0, T]\to X$  of (\ref{eq1.1}) is sufficiently smooth with derivatives in $X$ and $g: [t_0, T] \to X$ is sufficiently often Fr\'echet differentiable in a strip along the exact solution. All occurring derivatives are assumed to be uniformly bounded.}

Let $ e_{n+1} = u_{n+1} - u(t_{n+1})$ denote the global error at time $t_{n+1}$.
 In \cite{LO14b}, we have shown that $e_n$ satisfies the recursion 
\begin{equation} \label{eq3.4}
e_{n}=h\sum_{j=0}^{n-1} \ee^{(n-j)hA} \mathcal{K}_j (e_j)e_j + \sum_{j=0}^{n-1} \ee^{jhA}\tilde{e}_{n-j},
\end{equation}
where $\mathcal{K}_{j} (e_j)$ are bounded operators on $X$.

Motivated by Remark~\ref{remark2.2}, we now give a stronger convergence result (compared to those results given in \cite{HO05,LO14b}) in the sense that it requires relaxing only one order condition.
\begin{theorem}\label{th3.1} (Convergence)
Let the initial value problem \eqref{eq1.1} satisfy Assumptions 1--2. Consider for its numerical solution an explicit exponential Runge--Kutta method \eqref{eq:expRK} that fulfills the order conditions of
Table~\ref{tb1} up to order $p$ ($2 \le p \le 5$) in a strong form with the exception that only one condition $\psi_{p}(Z)=0$ holds in a weakened form, i.e.,  $\psi_{p}(0)=0$. Then, the method is convergent of order~$p$. In particular, the numerical solution $u_n$ satisfies the error bound
\begin{equation}\label{eq3.5}
\| u_n -u(t_n)\|\leq C h^p 
\end{equation}
uniformly on compact time intervals \ $t_0 \leq  t_n =t_0+nh \leq  T$ with a constant $C$ that depends on $T-t_0$, but is independent of $n$ and $h$.
\end{theorem}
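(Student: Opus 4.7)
The plan is to combine the local error expansion of \eqref{eq3.1}--\eqref{eq3.2} with the global error recursion \eqref{eq3.4}, and to isolate the single term that survives after imposing the order conditions of Table~\ref{tb1}. First, I would substitute all order conditions 1 through the ones corresponding to order $p$ (from Table~\ref{tb1}) in strong form into \eqref{eq3.1}--\eqref{eq3.2}. Each of the remainder terms in $\mathbf{R}_n$ is precisely a quantity that appears in one of conditions 3--7 or 9--16 applied to the appropriate smooth argument (by Assumption~2, $f'(t_n), f''(t_n), u'(t_n), \ldots$ are uniformly bounded in $X$); so those terms vanish. Likewise, the $\psi_2,\psi_3,\psi_4$ factors corresponding to orders strictly below $p$ vanish, and $\psi_{p+1}(hA) f^{(p)}(t_n) h^{p+1}$ and higher contribute only to the generic $\mathcal{O}(h^{p+1})$ remainder. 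The only term that does not vanish outright is the principal one coming from the relaxed condition, namely $h^p \psi_{p}(hA) f^{(p-1)}(t_n)$.

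Next I exploit the weakened form $\psi_p(0)=0$. By an integral remainder form of Taylor's theorem for entire functions of $hA$ (in the spirit of the $\varphi_k$ identities), I can factor
\begin{equation*}
\psi_p(hA) \;=\; hA \int_{0}^{1} \psi_p'(\theta hA)\,d\theta \;=:\; hA\,\Psi_p(hA),
\end{equation*}
where $\Psi_p(hA)$ is a bounded operator on $X$ uniformly in $h$, thanks to Assumption~1 and the analyticity of $\psi_p$. Hence the surviving part of the local error has the form $h^{p+1}\,A\,\Psi_p(hA) f^{(p-1)}(t_n)$, which superficially contains an unbounded factor of $A$. This is precisely the delicate step and is the main obstacle: if one simply estimates $\|A\|$ by $h^{-1}$ via $\|hA\,\Psi_p(hA)\|$, one loses an order and only recovers order $p-1$. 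The gain comes from summing over steps.

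Inserting the local error into \eqref{eq3.4}, the contribution of the weakened term to $e_n$ is
\begin{equation*}
\sum_{j=0}^{n-1} \ee^{jhA}\,hA\,\Psi_p(hA)\,h^{p}\,f^{(p-1)}(t_{n-j-1}).
\end{equation*}
Since $\Psi_p(hA)$ commutes with $\ee^{jhA}$, I would apply Abel summation (summation by parts) using the partial sums $S_m = hA \sum_{j=0}^{m}\ee^{jhA}$. By the stability bound \eqref{eq:bound2}, $\|S_m\| \le C$ uniformly in $m$ and $h$; the discrete differences $f^{(p-1)}(t_{n-j-1}) - f^{(p-1)}(t_{n-j-2})$ are $\mathcal{O}(h)$ thanks to Assumption~2. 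Together these yield a uniform $\mathcal{O}(h^{p})$ bound for this contribution, without loss of order. The remaining terms of $\tilde{e}_{n+1}$ are genuinely $\mathcal{O}(h^{p+1})$ and, being hit only by the semigroup $\ee^{jhA}$ (bounded by \eqref{eq:bound1}), sum to $n\cdot\mathcal{O}(h^{p+1}) = \mathcal{O}(h^p)$ on $nh \le T-t_0$.

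Finally, the quadrature-type term $h\sum_{j=0}^{n-1} \ee^{(n-j)hA} \mathcal{K}_j(e_j) e_j$ is bounded by $C h\sum_{j=0}^{n-1} \|e_j\|$ using the uniform boundedness of $\mathcal{K}_j$ and of the semigroup. Collecting everything yields a discrete Gronwall inequality
\begin{equation*}
\|e_n\| \;\le\; C h^{p} + C h \sum_{j=0}^{n-1} \|e_j\|,
\end{equation*}
from which \eqref{eq3.5} follows on $t_n - t_0 \le T-t_0$. The whole argument is a strengthening of \cite{HO05,LO14b} in that it only needs the summation-by-parts trick for the single weakened term $\psi_p$, rather than for the larger collection of previously weakened conditions; this is what permits the additional freedom used in Section~\ref{sec4} to design methods with many independent stages.
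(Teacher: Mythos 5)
Your proposal is correct and follows essentially the same route as the paper's proof: annihilate $\mathbf{R}_n$ and the lower-order $\psi_j$ terms via the strong conditions, factor the single surviving term as $\psi_p(hA)-\psi_p(0)=\tilde\psi_p(hA)\,hA$, insert into the recursion \eqref{eq3.4}, and close with the stability bounds \eqref{eq:bound1}--\eqref{eq:bound2} and a discrete Gronwall lemma. The only difference is that you spell out the Abel-summation step needed to apply \eqref{eq:bound2} to the $j$-dependent sum, which the paper leaves implicit in its citation of the bounds.
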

\begin{proof}
The proof can be carried out in a very similar way as done in \cite[Theorem 4.2]{LO14b}.
 In view of \eqref{eq3.1} and \eqref{eq3.2} and employing the assumptions of Theorem~\ref{th3.1} on the order conditions, we have  $\mathbf{R}_n=0$ and thus
\begin{equation} \label{eq3.6}
\tilde{e}_{n+1}=h^p \big( \psi_{p} ( h A)-\psi_{p} (0)\big) G_{p-1,n}+h^{p+1} \mathbf{S}_n,
\end{equation}
where 
$G_{p-1,n}$ is defined in Section~\ref{sec3.1} and
$\mathbf{S}_n$ involves the terms multiplying $h^{p+1}$ and higher order  in  \eqref{eq3.1} (clearly, $\|\mathbf{S}_n\|\leq C$).
Inserting \eqref{eq3.6} (with index $n-j-1$ in place of $n$) into \eqref{eq3.4} and using the fact that there exists a bounded operator $\tilde{\psi}_{p}(hA)$ such that  $ \psi_{p} ( h A)-\psi_{p} (0)=\tilde{\psi}_{p}(hA)hA$ yields
\begin{equation} \label{eq3.7}
e_{n}=h\sum_{j=0}^{n-1} \ee^{(n-j)hA} \mathcal{K}_j (e_j)e_j + h^p\sum_{j=0}^{n-1} hA\ee^{jhA}\tilde{\psi}_{p}(hA) G_{p-1,n-j-1} +h^{p+1} \mathbf{S}_{n-j-1}.
\end{equation}
Using \eqref{eq:bound1}, \eqref{eq:bound2} and an application of a discrete Gronwall lemma shows  \eqref{eq3.5}.\qed
\end{proof}
With the result of Theorem~\ref{th3.1} in hand, we are now ready to derive more efficient methods. In particular, we will solve the system of stiff order conditions of  Table~\ref{tb1} in the context of Theorem~\ref{th3.1}. It turns out that for methods of high order this will require an increase in the number of stages $s$. However, we will have more degree of freedoms for constructing our desired methods as seen in Section~\ref{sec4} below. In addition, by relaxing only one order condition, we expect methods resulted from Theorem~\ref{th3.1} to have better stability (and thus may be more accurate) when integrating stiff systems (see Section~\ref{sec:num}).
\section{Derivation of new efficient exponential Runge--Kutta methods}\label{sec4}
In this section, we will derive methods which have the following features: (i) containing multiple internal stages $U_{ni}$ that are independent of each other (henceforth called \emph{parallel stages}) and share the same format (thereby allowing them to be implemented in parallel); (ii) involving less number of evaluations of the form \eqref{eq:Uni_linear_combination} when compared to the existing methods of the same orders (thus behaving like methods that use fewer number of stages $s$).

We first start with methods of order  $p \le 3$. When solving order conditions for these methods (requiring at least $s=2$ and $s=3$ for second- and third-order methods, respectively), one can easily show that it is not possible to fulfill the desired feature (ii), particularly when comparing with  $\mathtt{expRK2s2}$ (order 2, 2-stage) and $\mathtt{expRK3s3}$ (order 3, 3-stage) mentioned in Section~\ref{sec2}. We omit the details.
Therefore, we will focus on the derivation of new methods of higher orders, namely, orders 4 and 5. 

\subsection{A family of fourth-order methods with parallel stages}\label{sec4.1}
Deriving methods of order 4 requires solving the set of 7 stiff order conditions 1--7 in Table~\ref{tb1}. First, we discuss on the required number of stages $s$.
It is shown in \cite[Sect.5.3]{HO05} that $s=5$ is the minimal number of stages required to construct a family of fourth-order methods which satisfies conditions 1--3 in the strong sense and conditions 4--7 in the weakened form (relaxing $b_i(Z)$ as $b_i(0)$). In other words, with $s=5$ it is not possible to fulfill the order conditions in the context of Theorem~\ref{th3.1}, which requires only condition 4 holds in a weakened form $\psi_{4}(0)=0$ or equivalently $\sum_{i=2}^{s}b_{i}(0)\frac{c_i^3}{3!} = \varphi_4(0)=1/24$. Therefore, we consider $s=6$.  In this case, conditions 1, 2, and the weakened condition 4 are
\begin{subequations} \label{eq4.1}
\begin{align}
b_2 c_2+ b_3 c_3+ b_4 c_4+b_5 c_5+b_6 c_6 &=\varphi_2, \\
b_2 c^2_2+ b_3 c^2_3+ b_4 c^2_4+b_5 c^2_5+b_6 c^2_6&=2\varphi_3,\\
b_2 (0) c^3_2+ b_3 (0)c^3_3+ b_4 (0) c^3_4+b_5 (0) c^3_5+b_6 (0) c^3_6&=6\varphi_4 (0)=1/4,
\end{align}
\end{subequations}
 and conditions 3, 5, 7 and 6 are
\begin{subequations} \label{eq4.2}
\begin{align}
&b_2 J \psi_{2,2}+ b_3 J \psi_{2,3}+ b_4 J \psi_{2,4}+ b_5 J \psi_{2,5}+ b_6 J \psi_{2,6}= 0, \label{eq4.2a} \\
&b_2 J \psi_{3,2}+ b_3 J \psi_{3,3}+ b_4 J \psi_{3,4}+ b_5 J \psi_{3,5}+ b_6 J \psi_{3,6}= 0, \label{eq4.2b} \\
&b_2 c_2 K \psi_{2,2}+ b_3 c_3 K \psi_{2,3}+ b_4 c_4 K \psi_{2,4}+ b_5 c_5 K \psi_{2,5}+ b_6 c_6 K \psi_{2,6}=0 \label{eq4.2c},\\
& b_3 J a_{32}J\psi_{2,2}  + b_4 J (a_{42}J\psi_{2,2}+a_{43}J\psi_{2,3})  
+b_5 J(a_{52}J\psi_{2,2}+a_{53}J\psi_{2,3}+a_{54}J\psi_{2,4})  \label{eq4.2d}\\
&+b_6J(a_{62}J\psi_{2,2}+a_{63}J\psi_{2,3}+a_{64}J\psi_{2,4}+ a_{65}J\psi_{2,5})=0. \notag
\end{align}
\end{subequations}
We now solve these order conditions. We note from \eqref{eq3.3b} that 
\begin{equation}\label{eq4.3}
\psi_{2,i} = \sum_{j=2}^{i-1}a_{ij}c_j-c^{2}_i \varphi_{2,i}, \q
\psi_{3,i} = \sum_{j=2}^{i-1}a_{ij}\frac{c^{2}_j}{2!}-c^{3}_i \varphi_{3,i}
\end{equation}
 and thus
$
 \psi_{2,2}=-c^{2}_{2}\varphi_{2,2} \ne 0, \q \psi_{3,2}=-c^{3}_{2}\varphi_{3,2} \ne 0
$
(since  $c_2\ne 0$). Using \eqref{eq4.3}, one can infer that either $ \psi_{2,3}$ or $ \psi_{3,3}$ must be nonzero as well (if both are zero then $a_{32}=\dfrac{c^2_3}{c_2}\varphi_{2,3}=\dfrac{2c^3_3}{c^2_2}\varphi_{3,3}$, which is impossible since $c_3>0$ and $\{\varphi_2, \varphi_3 \}$ are linearly independent). This strongly suggests that  $b_2=b_3=0$  in order to later fulfill  \eqref{eq4.2} in the strong sense with arbitrary square matrices $J$ and $K$. 
Next, we further observe that if $b_4 \ne 0$ one may need both  $ \psi_{2,4}= \psi_{3,4}=0$ (which solves $a_{42} \ne 0$, $a_{43} \ne 0$). However, this makes the second term in \eqref{eq4.2d} to be nonzero which is then very difficult to satisfy   \eqref{eq4.2d} in the strong form. Putting together, it requires that  $b_2=b_3=b_4=0$. Using this sufficient condition we can easily solve \eqref{eq4.1} to get 
\begin{equation*}
 b_5 =\dfrac{-c_6\varphi_2 + 2\varphi_3}{c_5(c_5-c_6)}, \q
 b_6 =\dfrac{-c_5\varphi_2 + 2\varphi_3}{c_6(c_6-c_5)}
\end{equation*}
for any choice of distinct nodes $c_5, c_6>0$, satisfying the condition 
\begin{equation}\label{eq:c5c6}
c_5=\dfrac{4c_6-3}{6c_6-4}.
\end{equation}
Since $b_5, b_6 \ne 0$, we must enforce  $\psi_{2,5}=\psi_{3,5}=0$ and $\psi_{2,6}=\psi_{3,6}=0$ to satisfy conditions \eqref{eq4.2a}--\eqref{eq4.2c}. 
 Using \eqref{eq4.3}, this leads to the following 2 systems of two linear equations
\begin{subequations} \label{eq4.4}
\begin{align}
a_{52} c_2+ a_{53} c_3+ a_{54}c_4&=c^2_5\varphi_{2,5}, \label{eq4.4a} \\
a_{52} c^2_2+ a_{53} c^2_3+ a_{54}c^2_4&=2c^3_5\varphi_{3,5}, \label{eq4.4b}
\end{align}
\end{subequations}
and 
\begin{subequations} \label{eq4.5}
\begin{align}
a_{62} c_2+ a_{63} c_3+ a_{64}c_4+ a_{65}c_5&=c^2_6\varphi_{2,6}, \label{eq4.5a} \\
a_{62} c^2_2+ a_{63} c^2_3+ a_{64}c^2_4+ a_{65}c^2_5&=2c^3_6\varphi_{3,6}. \label{eq4.5b}
\end{align}
\end{subequations}
To satisfy conditions \eqref{eq4.2d}, we further enforce $a_{52}=a_{62}=0$ (since  $\psi_{2,2} \ne 0$), which immediately solves \eqref{eq4.4} for coefficients (with $c_3 \ne c_4$) 
\begin{equation} \label{eq4.6}
a_{53} =\dfrac{-c_4 c^2_5\varphi_{2,5}+ 2 c^3_5\varphi_{3,5}}{c_3(c_3-c_4)} \ne 0,  \q
 a_{54}=\dfrac{-c_3 c^2_5\varphi_{2,5} + 2 c^3_5\varphi_{3,5}}{c_4(c_4-c_3)} \ne 0, 
\end{equation}
and thus we also need  $\psi_{2,3}=\psi_{2,4}=0$ (since $\psi_{2,5}=0$), which gives
 \begin{subequations} \label{eq4.7}
\begin{align}
a_{32} &=\dfrac{c^2_3}{c_2}\varphi_{2,3},  \label{eq4.7a} \\
a_{42} c_2+ a_{43} c_3&=c^2_4\varphi_{2,4}. \label{eq4.7b}
\end{align}
\end{subequations}
After fulfilling all the required order conditions in \eqref{eq4.1}--\eqref{eq4.2}, we see from \eqref{eq4.5} and \eqref{eq4.7b} that either $a_{42}$ or $a_{43}$ and one of the coefficients among $a_{63},\ a_{64},\ a_{65}$  can be taken as free parameters. We now use them to construct parallel stages.
 Guided by \eqref{eq4.6} and \eqref{eq4.7a}, we choose $a_{43}=0$ to make $U_{n4}$ is independent of $U_{n3}$ so that both these stages only depend on $U_{n2}$, and choose $a_{65}=0$ to make $U_{n6}$ is independent of $U_{n5}$ so that both these stages only depend on the two preceding stages $U_{n3}, U_{n4}$ (since $a_{52}=a_{62}=0$). From this we determine the remaining coefficients 
 \begin{equation} \label{eq4.8}
a_{42} =\dfrac{c^2_4}{c_2}\varphi_{2,4}, \q
a_{63} =\dfrac{-c_4 c^2_6\varphi_{2,6}+ 2 c^3_6\varphi_{3,6}}{c_3(c_3-c_4)}, \q
 a_{64}=\dfrac{-c_3 c^2_6\varphi_{2,6}+ 2 c^3_6\varphi_{3,6}}{c_4(c_4-c_3)}. 
\end{equation}
 Putting altogether and rearranging terms in $U_{ni},\ u_{n+1}$ as linear combinations of $\varphi$ functions, we obtain the following family of  4th-order 6-stage methods (with the pairs of parallel stages $\{U_{n3}, U_{n4}\}$ and $\{U_{n5}, U_{n6}\}$),  which will be called $\mathtt{expRK4s6}$:
 \begin{subequations}\label{eq:expRK4s6}
\begin{align}
U_{n2}=u_n &+\varphi_1 (c_2 h A)  c_2 hF(t_n, u_n), \\
U_{n,k}=u_n &+ \varphi_1 (c_k h A) c_k hF(t_n, u_n)+ \varphi_2 (c_k h A) \tfrac{c^2_k}{c_2} h D_{n2},  \q \hspace{1.5cm} k=3, 4 \\
U_{n,j}=u_n &+ \varphi_1 (c_j h A)c_j h F(t_n, u_n)+ \varphi_{2} (c_j hA) \tfrac{c^2_j}{c_3-c_4} h \big(\tfrac{-c_4}{c_3}D_{n3} +\tfrac{c_3}{c_4}D_{n4}\big)\notag\\
&+  \varphi_{3} (c_j hA) \tfrac{2c^3_j}{c_3-c_4} h \big(\tfrac{1}{c_3}D_{n3} -\tfrac{1}{c_4}D_{n4}\big), \q \q \ \hspace{2cm} j=5,6 \\
u_{n+1}=u_n &+ \varphi_1 (h A) h F(t_n, u_n)+ \varphi_{2} (hA) \tfrac{1}{c_5-c_6} h \big(\tfrac{-c_6}{c_5}D_{n5} +\tfrac{c_5}{c_6}D_{n6}\big) \notag \\
&+\varphi_{3} (hA) \tfrac{2}{c_5-c_6} h \big(\tfrac{1}{c_5}D_{n5} -\tfrac{1}{c_6}D_{n6}\big). 
\end{align}
\end{subequations} 
For the numerical experiments given in Section~\ref{sec:num}, we choose $c_2=c_3=\frac{1}{2}, c_4=\frac{1}{3}$, $c_6=\frac{1}{3}$ which gives $c_5=\frac{5}{6}$ due to \eqref{eq:c5c6}.
\begin{remark}\label{remark4.1}
(\emph{A comparison with $\mathtt{expRK4s5}$}).
As seen,  $\mathtt{expRK4s6}$  is resulted from weakening only condition 4 of Table~\ref{tb1} instead of weakening four conditions 4--7 as in the derivation of $\mathtt{expRK4s5}$. 
While the 5-stage method $\mathtt{expRK4s5}$ requires 6 sequential evaluations in each step (as mentioned in Section~2), the new  fourth-order 6-stage method  $\mathtt{expRK4s6}$ requires only 4 sequential evaluations, making it to behave like a 4-stage method. This is due to the fact  $\mathtt{expRK4s6}$  has the pairs of parallel stages $\{U_{n3}, U_{n4}\}$ and $\{U_{n5}, U_{n6}\}$ and all $U_{ni}$ within these pairs have the same format, i.e., same (one) linear combination of $\varphi_k (c_i h A)v_k$, allowing them to be computed in parallel or simultaneously (see Section~\ref{sec:implementation}).
\end{remark}
 
\subsection{A family of  fifth-order methods with parallel stages}
Constructing fifth-order exponential Runge-Kutta methods needs much more effort as one has to solve 16 order conditions in Table~\ref{tb1}. As mentioned in Section~2, the only existing method of order 5 in the literature is $\mathtt{expRK5s8}$ (see \cite{LO14b}) which requires $s=8$ stages. Like $\mathtt{expRK4s5}$, this method does not have any parallel stages and must be implemented in a sequential way. It also does not satisfy the assumption on the order conditions stated in Theorem~\ref{th3.1}. Indeed, it was constructed by fulfilling conditions 1--7 in the strong form and weakening conditions 8--16  (9 out of 16 order conditions) with $b_i(0)$ in place of $b_i (Z)$. This resulted in the last two internal stages $U_{n7}$ and  $U_{n8}$ that involve several different linear combinations of $\varphi_k (c_i hA)v_k$ (with different scalings $c_6, c_7, c_8$ of $hA$), for which additional computational efforts are required to compute those stages (as shown in Section~2).

Therefore,  to derive a method based on Theorem~\ref{th3.1} which later allows us to derive parallel stages schemes with  $U_{ni}$ involving only one linear combination of $\varphi_k (c_i hA)v_k$, we have to increase $s\geqslant 9$. To make it easier for readers to follow, we consider $s=10$ first and later employ the similar procedure to show that it is not possible to fulfill  condition 11 of Table~\ref{tb1} in the strong form (and thus not satisfying Theorem~\ref{th3.1}) with $s=9$.\\
\\
\emph{a) The case $s=10$}: Similarly to the derivation presented in Subsection~\ref{sec4.1}, using \eqref{eq4.3}, it strongly suggests  $b_2=b_3=b_4=b_5=b_6=b_7=0$ in order to solve conditions 3, 5, 9, 7, 16, 13, and 15 in their strong form.  Using this, these conditions now read as 
\begin{subequations} \label{eq4.10}
\begin{align}
&b_8 J \psi_{2,8}+ b_9 J \psi_{2,9}+ b_{10} J \psi_{2,10}= 0, \label{eq4.10a} \\
&b_8 J \psi_{3,8}+ b_9 J \psi_{3,9}+ b_{10} J \psi_{3,10}= 0,  \label{eq4.10b} \\
&b_8 J \psi_{4,8}+ b_9 J \psi_{4,9}+ b_{10} J \psi_{4,10}= 0,  \label{eq4.10c} \\
&b_8 c_8 K \psi_{2,8}+ b_9 c_9 K \psi_{2,9}+ b_{10} c_{10} K \psi_{2,10}=0 \label{eq4.10d},\\
&b_8 c^2_8 L \psi_{2,8}+ b_9 c^2_9 L \psi_{2,9}+ b_{10} c^2_{10} L \psi_{2,10}=0 \label{eq4.10e},\\
&b_8 c_8 K \psi_{3,8}+ b_9 c_9 K \psi_{3,9}+ b_{10} c_{10} K \psi_{3,10}=0 \label{eq4.10f},\\
&b_8 B(\psi_{2,8},\psi_{2,8})+ b_9 B(\psi_{2,9},\psi_{2,9})+ b_{10} B(\psi_{2,10},\psi_{2,10})=0, \label{eq4.10g}
 \end{align}
\end{subequations}
respectively. And conditions 1, 2, 4, and 8 (weakened form) become
 \begin{subequations} \label{eq4.9}
\begin{align}
 b_8 c_8+b_9 c_9+b_{10} c_{10} &=\varphi_2, \\
 b_8 c^2_8+b_9 c^2_9+b_{10} c^2_{10}&=2\varphi_3,\\
b_8 c^3_8+b_9 c^3_9+b_{10} c^3_{10}&=6\varphi_4,\\
b_8 (0) c^4_8+b_9 (0) c^4_9+b_{10}(0) c^4_{10}&=24\varphi_5(0)=1/5.
\end{align}
\end{subequations}
 Solving \eqref{eq4.9} gives 
 \begin{subequations} \label{eq4.11}
\begin{align} 
 b_8& =\dfrac{c_9 c_{10}\varphi_2 - 2(c_9+c_{10})\varphi_3+6\varphi_4}{c_8(c_8-c_9)(c_8-c_{10})},\\
 b_9& =\dfrac{c_8 c_{10}\varphi_2 - 2(c_8+c_{10})\varphi_3+6\varphi_4}{c_9(c_9-c_8)(c_9-c_{10})},\\ 
  b_{10}& =\dfrac{c_8 c_9\varphi_2 - 2(c_8+c_9)\varphi_3+6\varphi_4}{c_{10}(c_{10}-c_8)(c_{10}-c_9)}
\end{align}
\end{subequations}
where   $c_8, c_9$, and $c_{10}$ are distinct and positive nodes satisfying the algebraic equation
\begin{equation}\label{eq4.12}
\dfrac{c_8+c_9+c_{10}}{4}-\dfrac{c_8 c_9 + c_8 c_{10} + c_9 c_{10}}{3}+\dfrac{c_8 c_9 c_{10}}{2}=\dfrac{1}{5}.
\end{equation}
Clearly, $b_8, b_9, b_{10} \ne 0$ so one has to enforce
\begin{equation}\label{eq4.13} 
\psi_{2,j}=\psi_{3,j}=\psi_{4,j}=0  \ (j=8, 9,10)
\end{equation}
to satisfy \eqref{eq4.10}  in the strong sense with arbitrary square matrices $J, K, L$ and $B$. 
Next, we consider conditions 6 and 10 taken into account that $b_i=0 $ ($i=2,\cdots,7$) and \eqref{eq4.13}, which can be now simplified as 
\begin{equation}\label{eq4.14} 
\sum_{j=2}^{7} (b_8 J a_{8j}+ b_9 J a_{9j}+ b_{10} J a_{10j}) J \psi_{m,j}=0    \ \  (m=2, 3),
\end{equation}
respectively.
In order to satisfy the strong form of   \eqref{eq4.14}  one needs
 \begin{equation}\label{eq4.15} 
a_{8j}=a_{9j}=a_{10j}=0  \ (j=2, 3, 4)
\end{equation}
(this is again due to \eqref{eq4.3}) and 
\begin{equation}\label{eq4.16} 
\psi_{2,j}=\psi_{3,j}=0  \ (j=5, 6,7).
\end{equation}
With \eqref{eq4.15}, we note that $U_{n8}, U_{n9}, U_{n10}$ are independent of the internal stages $U_{n2}, U_{n3}, U_{n4}$.
Taking into all the requirements above, one can easily see that conditions 12 and 14 are now automatically fulfilled. Therefore, the only remaining condition to satisfy is condition 11. 

Before working with condition 11, we first solve   \eqref{eq4.13} using  \eqref{eq4.15}. 
For this, we observe that several coefficients $a_{ij}$ can be considered as free parameters. To have $U_{n8}, U_{n9}, U_{n10}$ are independent of each other, we choose 
 \begin{equation}\label{eq4.16add} 
a_{98}=a_{10,8}=a_{10,9}=0. 
\end{equation}
The resulting systems of linear equations from  \eqref{eq4.13} is then solved with the unique solution
\begin{equation}\label{eq4.17} 
a_{ij} =\dfrac{c^2_i c_k c_l \varphi_{2,i} -2 c^3_i (c_k +c_l)\varphi_{3,i}+ 6 c^4_i \varphi_{4,i}}{c_j(c_j-c_k)(c_j - c_l)}, \q
i=8, 9,10; \ j, k, l \in \{5, 6, 7\}, \ j \ne k \ne l 
\end{equation}
(i.e., $c_5, c_6, c_7>0$ are distinct nodes).\\
We now use $b_i=0 $ ($i=2,\cdots,7$), \eqref{eq4.13}, \eqref{eq4.15},  \eqref{eq4.16}, and \eqref{eq4.16add} to simplify condition 11 as 
\begin{equation}\label{eq4.18} 
\sum_{i=8}^{10} b_i J \sum_{j=5}^{7} a_{ij} J \big(a_{j2}J\psi_{2,2}+ a_{j3}J\psi_{2,3}+a_{j4}J\psi_{2,4}\big)=0.
\end{equation}
Since $b_8, b_9, b_{10} \ne 0$, coefficients  $a_{ij}$ in  \eqref{eq4.17} ($i \in \{8, 9, 10\}, j \in \{5, 6, 7\}$) are also nonzero, and that $\psi_{2,2} \ne 0$, we must enforce 
 \begin{equation}\label{eq4.19} 
a_{j2}=0  \ (j=5, 6, 7), \ \text{i.e.,} \ a_{52}=a_{62}=a_{72}=0
\end{equation}
and require that
\begin{equation}\label{eq4.20} 
\psi_{2,3}=\psi_{2,4}=0
\end{equation}
in order to satisfy  \eqref{eq4.18} in the strong sense.
Note, because of  \eqref{eq4.19},  one could not require $a_{53}=0$ or $a_{54}=0$ ($j=5$) in \eqref{eq4.18} or both as this does not agree with the requirement $\psi_{2,5}=\psi_{3,5}=0$ in \eqref{eq4.16}  (in other words, the linear system of equations displayed in \eqref{eq4.4} represented for this requirement has no solution). This justifies the requirement  \eqref{eq4.20}.

Finally, we solve \eqref{eq4.20} and  \eqref{eq4.16} for the remaining coefficients $a_{ij}$. 
When solving  \eqref{eq4.20}  (see \eqref{eq4.7}), we choose $a_{43}=0$ to have $U_{n4}$ is independent of $U_{n3}$. This gives 
\begin{equation}\label{eq4.21} 
a_{32} =\dfrac{c^2_3}{c_2}\varphi_{2,3}, \q  a_{42} =\dfrac{c^2_4}{c_2}\varphi_{2,4}.
\end{equation}
When solving \eqref{eq4.16} (using \eqref{eq4.19}), we choose $a_{65}=a_{75}=a_{76}=0$ to have $U_{n5}, U_{n6}, U_{n7}$ are independent of each other. This results in the following 6 coefficients:
\begin{equation}\label{eq4.22} 
a_{ij} =\dfrac{ -c^2_i  c_k \varphi_{2,i} + 2 c^3_i \varphi_{3,i}}{c_j (c_j-c_k)}, \q
i=5, 6,7; \ j, k  \in \{3, 4\}, \ j \ne k  
\end{equation}
(i.e., $c_3, c_4 >0$ are distinct nodes).

Inserting all the obtained coefficients $a_{ij}$ and $b_i$ into $U_{ni},\ u_{n+1}$ and rewriting these stages as linear combinations of $\varphi$ functions, we obtain the following family of  5th-order 10-stage methods  (with the groups of parallel stages $\{U_{n3}, U_{n4}\}$,  $\{U_{n5}, U_{n6}, U_{n7}\}$, and  $\{U_{n8}, U_{n9}, U_{n10}\}$) which will be called $\mathtt{expRK5s10}$:
\begin{equation*}\label{eq:expRK5s10}
\begin{aligned}
    U_{n2} = u_n &+ \varphi_1 (c_2 hA) c_2 hF(t_n,u_n), \\
    U_{n\ell} = u_n &+ \varphi_1 (c_{\ell} hA) c_{\ell} hF(t_n, u_n) + \varphi_2 (c_{\ell} hA) \tfrac{c^2_{\ell}}{c_2} h D_{n2},  \hspace{3.3cm} \ell=3,4 \\
 U_{nm} = u_n &+ \varphi_1 (c_m hA)c_m hF(t_n, u_n)+ \varphi_{2} (c_m hA) c^2_m  h \big(\tfrac{c_4 }{c_3 (c_4-c_3)}D_{n3}  +\tfrac{c_3 }{c_4 (c_3-c_4)} D_{n4}\big)\\
    &+  \varphi_{3} (c_m hA) c^3_m h \big(\tfrac{2}{c_3 (c_3-c_4)} D_{n3} -\tfrac{2}{c_4 (c_3-c_4)} D_{n4}\big), \q \hspace{2.5cm} m=5,6,7 \\
    U_{nq} = u_n &+ \varphi_1 (c_q hA)c_q hF(t_n, u_n)
    + \varphi_{2} (c_q hA) c^2_q  h \big(\alpha_5 D_{n5} +\alpha_6 D_{n6}+\alpha_7 D_{n7} \big)\\
    &+ \varphi_{3} (c_q hA) c^3_q h \big(\beta_5 D_{n5} -\beta_6 D_{n6}-\beta_7 D_{n7}\big) \\
    &+ \varphi_{4} (c_q hA) c^4_q h \big(\gamma_5 D_{n5} +\gamma_6 D_{n6}+\gamma_7 D_{n7}\big), \hspace{3.4cm} q=8,9,10 \\
    u_{n+1} = u_n &+ \varphi_1 (hA) h F(t_n, u_n)+ \varphi_{2} (hA) h \big(\alpha_8 D_{n8} + \alpha_9 D_{n9} +\alpha_{10} D_{n10} \big) \\
    &-\varphi_{3} (hA)  h \big(\beta_8 D_{n8} + \beta_9 D_{n9} +\beta_{10} D_{n10} \big)
    +\varphi_{4} (hA)  h \big(\gamma_8 D_{n8} + \gamma_9 D_{n9} +\gamma_{10} D_{n10} \big),
  \end{aligned}
\end{equation*} 
where
\begin{equation}\label{eq:coefficients}
   \alpha_i = \dfrac{c_k c_l}{c_i (c_i-c_k)(c_i - c_l)},\q  
    \beta_i = \dfrac{2(c_k+ c_l)}{c_i (c_i-c_k)(c_i - c_l)}, \q 
\gamma_i = \dfrac{6}{c_i (c_i-c_k)(c_i - c_l)}
\end{equation} 
with  $i \in \{5, 6, 7\}$ for $\ k, l \in \{5, 6, 7\}$, and $i \in \{8, 9, 10\}$ for $\ k, l \in \{8, 9, 10\}$ (note that $i, k, l$ are distinct indices and that $c_i, c_k, c_l$ are distinct (positive) nodes). \\
For our numerical experiments, we choose
$c_2=c_3=c_5=\tfrac{1}{2}$, $c_4=c_6=\tfrac{1}{3}$, $c_7=\tfrac{1}{4}$,
$c_8=\tfrac{3}{10}$, $c_9=\tfrac{3}{4}$, and $c_{10}=1$ (satisfying  \eqref{eq4.12}). 
\begin{remark}\label{remark4.2}
(\emph{A comparison with  $\mathtt{expRK5s8}$}).
Although the new fifth-order method $\mathtt{expRK5s10}$ has 10 stages (compared to 8 stages of $\mathtt{expRK5s8}$ displayed in Section~2), its special structure offers much more efficient for implementation. In particular, all $U_{ni}$ in this scheme involve only one linear combination of $\varphi_k (c_i h A)v_k$ which can be computed by one evaluation for each; and more importantly, due to the same format of multiple stages within each of the three groups $\{U_{n3}, U_{n4}\}$,  $\{U_{n5}, U_{n6}, U_{n7}\}$, and  $\{U_{n8}, U_{n9}, U_{n10}\}$ (same linear combination with different inputs $c_i$), they can be computed simultaneously or implemented in parallel (see Section~\ref{sec:implementation}). This makes $\mathtt{expRK5s10}$ to behave like a 5-stage method only, thereby requiring only 5 sequential evaluations in each step.
Moreover, while $\mathtt{expRK5s8}$ requires weakening 9 out of 16 order conditions of Table~\ref{tb1},  $\mathtt{expRK5s10}$ requires only one condition (number 8) held in the weakened form. Note that by following the similar way of deriving  $\mathtt{expRK5s10}$, we can derive a scheme that satisfies all the stiff order conditions  in Table~\ref{tb1} in the strong sense with $s=11$. Such a scheme, however, still behaves like a 5-stage method. Therefore, we do not  discuss further on this case.
\end{remark}
\emph{b) The case $s=9$ (which does not work)}: Clearly, in this case we have less degree of freedoms than the case $s=10$ when solving the order conditions in  Table~\ref{tb1}. Nonetheless, one can still proceed in a similar way as done for $s=10$. Again, it strongly suggests  $b_2=b_3=b_4=b_5=b_6=0$ (which solves for $b_7, b_8, b_{9} \ne 0$ from conditions 1, 2, 4) and 
\begin{equation}\label{eq4.27} 
\psi_{2,j}=\psi_{3,j}=\psi_{4,j}=0  \ (j=7, 8, 9)
\end{equation}
in order to satisfy conditions 1, 2,  3, 4, 5, 7, 9, 13, 15, 16  in the strong form. 
With this, conditions 6 and 10 now become 
\begin{equation}\label{eq4.28} 
\sum_{j=2}^{6} (b_7 J a_{7j}+ b_8 J a_{8j}+ b_{9} J a_{9j}) J \psi_{m,j}=0   \ \  (m=2, 3).
\end{equation}
Again, due to the fact that $\psi_{2,2}, \psi_{3,2} \ne 0$ and either $ \psi_{2,3}$ or $ \psi_{3,3}$ must be nonzero, one needs to enforce  
$
a_{7j}=a_{8j}=a_{9j}=0  \ (j=2, 3)
$
in  \eqref{eq4.28}.
Using this to solve \eqref{eq4.27}  for $j=7$ ($\psi_{2,7}=\psi_{3,7}=\psi_{4,7}=0$) gives a unique solution (with $c_4, c_5, c_6 >0$ and are distinct) for  $a_{74}, a_{75}, a_{76} \ne 0$, which then determines $U_{n7}$. Next, one can  solve \eqref{eq4.27}  for  $j=8, 9$  to obtain $ U_{n8}, U_{n9}$ that are independent of $U_{n7}$, as well as  are independent of each other, by requiring the three free parameters $a_{87}=a_{97}=a_{98}=0$. As a result, one gets $a_{7j}, a_{8j}, a_{9j} \ne 0  \ (j=5, 6)$. This immediately suggests 
$
\psi_{2,j}=\psi_{3,j}=0  \ (j=4, 5, 6)
$
to completely fulfill  \eqref{eq4.28}  with arbitrary square matrix $J$. 
With all of these in place, conditions 12 and 14 are automatically fulfilled, and condition 11 is now reduced to 
\begin{equation}\label{eq4.29} 
\sum_{i=7}^{9} b_i J \sum_{j=4}^{6} a_{ij} J \big(a_{j2}J\psi_{2,2}+ a_{j3}J\psi_{2,3}\big)=0.
\end{equation}
Clearly, since $b_7, b_8, b_{9} \ne 0$, $a_{7j}, a_{8j}, a_{9j} \ne 0  \ (j=4, 5, 6)$, and $\psi_{2,2} \ne 0$,   \eqref{eq4.29} can be satisfied in the strong sense only if we have one of the following conditions: $a_{j2}=a_{j3}=0 $ or $a_{j2}=\psi_{2,3}=0, \ (j=4, 5, 6)$.
Unfortunately,  either of these requirements is in contradiction with $
\psi_{2,j}=\psi_{3,j}=0  \ (j=4, 5, 6)$ which is needed for conditions 6 and 10 mentioned above. For example, solving $\psi_{2,4}=\psi_{3,4}=0$ results in $a_{42}, a_{43} \ne 0$.

\section{Details implementation of fourth- and fifth-order schemes}\label{sec:implementation}
In this section, we present details implementation of the old and new fourth- and fifth-order expRK schemes  ($\mathtt{expRK4s5}$, $\mathtt{expRK5s8}$,  $\mathtt{expRK4s6}$, $\mathtt{expRK5s10}$) mentioned above.

As mentioned in Section~\ref{sec2.1}, we will use the MATLAB routine \texttt{phipm\_simul\_iom} (described in details in \cite{Luan18}) 
 to implement expRK methods. 
 In particular, given the following inputs: an array of scaling factors $\mathtt{t}=[\rho_1, \cdots, \rho_{r}]$ with $0<\rho_1<\rho_2<\cdots<\rho_r \le 1$ ($\mathtt{t}$ could be a positive scalar), 
an $n$-by-$n$ matrix $M$, and a set of  column vectors  $\mathtt{V}=[V_0,\ldots,v_q]$ (each $v_i$ is an $n$-by-$1$ vector), a tolerance $\mathtt{tol}$, an initial value $m$ for the dimension of the Krylov subspace, and an incomplete orthogonalization length of  $\mathtt{iom}$, a call to this function
\begin{equation} \label{eq5.1}
\mathtt{phipm\_simul\_iom(t, M, V, tol, m, iom)}
\end{equation}
simultaneously computes the following $r$ linear combinations 
\begin{equation} \label{eq5.2}
L_{\rho_i ,\mathtt{V}}= \varphi_0 (\rho_i M)v_0 + \varphi_1 (\rho_i M) \rho_i v_1 +\varphi_2 (\rho_i M) \rho_{i}^2 v_2+\cdots+\varphi_{q}(\rho_i  M) \rho_{i} ^q v_{q},  \ 1 \le i \le r.
\end{equation}
Note that, by setting $V_{j}=\rho_{i}^{j} v_j $ ($j=0,\cdots,q$), \eqref{eq5.2} becomes  \eqref{eq2.2}.  In other words, all the linear combinations in \eqref{eq2.2} ( if $V_j$ are given instead of $v_j$) can be then computed at the same time with one call \eqref{eq5.1}  by using scaled vectors $v_j= V_{j}/ \rho_{i}^{j}$ for the input $\mathtt{V}$.

In the following, we set 
\begin{equation} \label{eq5.3}
 M=hA, \q \rho_i= c_i, \q  v=h F (t_n, u_n), \q  d_{i}=hD_{ni}. 
\end{equation}
to simplify notations in presenting details of implementation of the fourth- and fifth-order methods mentioned above. When calling \eqref{eq5.1}, we use $\mathtt{tol}=10^{-12}$,  $\mathtt{m}=1$ (default value), and $\mathtt{imo}=2$ (as in  \cite{Luan18}).

\noindent \emph{Implementation of $\mathtt{expRK4s5}$} 
($c_2=c_3=c_5=\tfrac{1}{2}, c_4=1 $):  
As discussed in Remark~\ref{remark2.1}, $\mathtt{expRK4s5}$ requires  a sequential implementation of the following 6 different linear combinations  of the form \eqref{eq5.2}, corresponding to 6 calls to \texttt{phipm\_simul\_iom}:
\begin{itemize}
\item[(i)] Evaluate  $L_{c_2,\mathtt{V}}$ with $\mathtt{t}=c_2, \mathtt{V}=[0, v]$ to get $U_{n2}=u_n + L_{c_2,\mathtt{V}}$.

\item[(ii)] Evaluate  $L_{c_3,\mathtt{V}}$ with  $\mathtt{t}=c_3, \mathtt{V}=[0, v, d_2/c^2_3]$ to get $U_{n3}=u_n + L_{c_3,\mathtt{V}}$.

\item[(iii)] Evaluate  $L_{c_4,\mathtt{V}}$  with  $\mathtt{t}=c_4, \mathtt{V}=[0, v, d_2 +d_3]$ to get $U_{n4}=u_n + L_{c_4,\mathtt{V}}$.

\item[(iv)] Evaluate  $L_{c_5,\mathtt{V_1}}$  with  $\mathtt{t}=c_5, \mathtt{V}_1=[0, v, 2 d_{2} + 2 d_{3} - d_{4},(-d_{2} - d_{3} + d_{4})/c^2_5] $ 
and
\item[(v)] Evaluate  $L_{c_4,\mathtt{V_2}}$  with  $\mathtt{t}=c_4, \mathtt{V}_2=[0, 0, ( d_{2} +  d_{3} - d_{4})/4,(-d_{2} - d_{3} + d_{4})] $ \\
to get $U_{n5}=u_n + L_{c_5,\mathtt{V}_1}+L_{c_4,\mathtt{V_2}} $.

\item[(vi)] Evaluate  $L_{1,\mathtt{V}}$ with $\mathtt{t}=1, \mathtt{V}=[0,v, -d_4 +5d_5, 4d_4-8d_5 ]$ to get $u_{n+1}=u_n + L_{1,\mathtt{V}}$.
\end{itemize}
Since $d_i=hD_{ni}$ which depends on $U_{ni}$, these are the  6 (sequential) evaluations.\\

\noindent \emph{Implementation of $\mathtt{expRK4s6}$}
($c_2=c_3=\frac{1}{2}, c_4=c_6=\frac{1}{3}, c_5=\frac{5}{6}$):  
As discussed in Remark~\ref{remark4.1},  $\mathtt{expRK4s6}$ can be implemented like a 4-stage method by evaluating the following 4 sequential evaluations, corresponding to 4 calls to \texttt{phipm\_simul\_iom}:
\begin{itemize}
\item[(i)] Evaluate  $L_{c_2,\mathtt{V}}$ with $\mathtt{t}=c_2, \mathtt{V}=[0, v]$ to get $U_{n2}=u_n + L_{c_2,\mathtt{V}}$.

\item[(ii)] Evaluate  $L_{c_4,\mathtt{V}}$ and $L_{c_3,\mathtt{V}}$ simultaneously using  $\mathtt{t}=[c_4, c_3], \mathtt{V}=[0, v, d_2/c_2]$ to get both $U_{n3}=u_n + L_{c_3,\mathtt{V}}$ and $U_{n4}=u_n + L_{c_4,\mathtt{V}}$.

\item[(iii)] Evaluate  $L_{c_5,\mathtt{V}}$ and $L_{c_6,\mathtt{V}}$ simultaneously  with $\mathtt{t}=[c_6, c_5],\\ \mathtt{V}=[0, v, \tfrac{-c_4}{(c_3-c_4)c_3}d_3 +\tfrac{c_3}{(c_3-c_4)c_4}d_4,  \tfrac{1}{(c_3-c_4)c_3}d_3 -\tfrac{1}{(c_3-c_4)c_4}d_4]$ 
to get both $U_{n5}=u_n + L_{c_5,\mathtt{V}}$ and $U_{n6}=u_n + L_{c_6,\mathtt{V}}$.

\item[(iv)] Evaluate  $L_{1,\mathtt{V}}$ with $\mathtt{t}=1, \mathtt{V}=[0,v, \tfrac{1}{c_5-c_6}(\tfrac{-c_6}{c_5}d_5 + \tfrac{c_5}{c_6}d_6), \tfrac{2}{c_5-c_6}(\tfrac{1}{c_5}d_5 - \tfrac{1}{c_6}d_6) ]$ to get $u_{n+1}=u_n + L_{1,\mathtt{V}}$.
\end{itemize}

\noindent \emph{Implementation of $\mathtt{expRK5s8}$}
($c_2=c_3=c_5=\frac{1}{2}, c_4=\frac{1}{4}, c_6=\frac{1}{5}, c_7=\frac{2}{3}, c_8=1$):  
As discussed in Remark~\ref{remark2.1},   $\mathtt{expRK5s8}$ requires  a sequential implementation of 11 different linear combinations  of the form \eqref{eq5.2}, corresponding to the following
 11 calls to \texttt{phipm\_simul\_iom}:
\begin{itemize}
\item[(i)] 
Evaluate  $L_{c_2,\mathtt{V}}$ with $\mathtt{t}=c_2, \mathtt{V}=[0, v]$ to get $U_{n2}=u_n + L_{c_2,\mathtt{V}}$.

\item[(ii)] 
Evaluate  $L_{c_3,\mathtt{V}}$ with  $\mathtt{t}=c_3, \mathtt{V}=[0, v, d_2/c^2_3]$ to get $U_{n3}=u_n + L_{c_3,\mathtt{V}}$.

\item[(iii)] 
Evaluate  $L_{c_4,\mathtt{V}}$ with  $\mathtt{t}=c_4, \mathtt{V}=[0, v, d_3/c^2_4]$ to get $U_{n4}=u_n + L_{c_4,\mathtt{V}}$.

\item[(iv)] 
Evaluate  $L_{c_5,\mathtt{V}}$ with  $\mathtt{t}=c_5, \mathtt{V}=[0, v, (- d_{3} + 4d_{4})/c^2_5, (2d_{3} - 4d_{4})/c^3_5 ]$ to get $U_{n5}=u_n + L_{c_5,\mathtt{V}}$.

\item[(v)] 
Evaluate  $L_{c_6,\mathtt{V}}$ with  $\mathtt{t}=c_6, \mathtt{V}=[0, v, (8d_{4} - 2d_{5})/c^2_6, (-32d_{4} +16d_{5})/c^3_6 ]$ to get $U_{n6}=u_n + L_{c_6,\mathtt{V}}$.

\item[(vi)] 
Evaluate  $L_{c_7,\mathtt{V_1}}$  with  $\mathtt{t}=c_7, \mathtt{V}_1=[0, v, (\tfrac{-16}{27} d_{5} +\tfrac{100}{27}  d_{6})/c^2_7, (\tfrac{320}{81} d_{5} -\tfrac{800}{81}  d_{n6})/c^3_7] $ 
and
\item[(vii)] 
Evaluate  $L_{c_6,\mathtt{V_2}}$  with  $\mathtt{t}=c_6, 
\mathtt{V}_2=[0, 0, (\tfrac{-20}{81} d_{4} +\tfrac{5}{243}  d_{5}+\tfrac{125}{486}  d_{6})/c^2_6, (\tfrac{16}{81} d_{4} -\tfrac{4}{243}  d_{5}-\tfrac{50}{243}  d_{6})/c^3_6] $ 
to get $U_{n7}=u_n + L_{c_7,\mathtt{V}_1}+L_{c_6,\mathtt{V_2}} $.

\item[(viii)] 
Evaluate  $L_{c_8,\mathtt{V_1}}$  with  $\mathtt{t}=c_8, \mathtt{V}_1=[0, v, (\tfrac{-16}{3} d_{5} +\tfrac{250}{21}  d_{6}+\tfrac{27}{14}  d_{7})/c^2_8, (\tfrac{208}{3} d_{5} -\tfrac{250}{3}  d_{6}- 27 d_{7})/c^3_8, (-240d_{5} + \tfrac{1500}{7}  d_{6}+  \tfrac{810}{7} d_{7})/c^4_8 ] $ 
and
\item[(ix)] 
Evaluate  $L_{c_6,\mathtt{V_2}}$  with  $\mathtt{t}=c_6, \\
\mathtt{V}_2=[0, 0, (\tfrac{-4}{7} d_{5} + \tfrac{25}{49}  d_{6} +\tfrac{27}{98} d_{7}) /c^2_6, (\tfrac{8}{5} d_{5} - \tfrac{10}{7}  d_{6} - \tfrac{27}{35} d_{7})/c^3_6, (\tfrac{-48}{35} d_{5} + \tfrac{60}{49}  d_{6} + \tfrac{162}{245} d_{7})/c^4_6]$ 
and
\item[(x)] 
Evaluate  $L_{c_7,\mathtt{V_3}}$  with  $\mathtt{t}=c_7, 
\mathtt{V}_3=[0, 0, (\tfrac{-288}{35} d_{5} + \tfrac{360}{49}  d_{6} +\tfrac{972}{245} d_{7}) /c^2_7, (\tfrac{384}{5} d_{5} - \tfrac{480}{7}  d_{6} - \tfrac{1296}{35} d_{7})/c^3_7, (\tfrac{-1536}{7} d_{5} + \tfrac{9600}{49}  d_{6} + \tfrac{5184}{49} d_{7})/c^4_7] $ \\
to get $U_{n8}=u_n + L_{c_8,\mathtt{V}_1}+ L_{c_6,\mathtt{V}_2}+L_{c_7,\mathtt{V_3}} $.

\item[(xi)] 
Evaluate  $L_{1,\mathtt{V}}$ with $\mathtt{t}=1, \mathtt{V}=[0,v, \tfrac{125}{14} d_{6} -\tfrac{27}{14} d_{7} +  \tfrac{1}{2} d_{8}, \tfrac{-625}{14} d_{6} + \tfrac{162}{7} d_{7} -  \tfrac{13}{2} d_{8},\tfrac{1125}{14} d_{6} -  \tfrac{405}{7} d_{7}  +  \tfrac{45}{2} d_{8} ]$ to get $u_{n+1}=u_n + L_{1,\mathtt{V}}$.
\end{itemize}

\noindent \emph{Implementation of $\mathtt{expRK5s10}$}
($c_2=c_3=c_5=\tfrac{1}{2}$, $c_4=c_6=\tfrac{1}{3}$, $c_7=\tfrac{1}{4}$,
$c_8=\tfrac{3}{10}$, $c_9=\tfrac{3}{4}$, and $c_{10}=1$): 
As discussed in Remark~\ref{remark4.2},  $\mathtt{expRK5s10}$ can be implemented like a 5-stage method by evaluating the following 5 sequential evaluations, corresponding to 5 calls to \texttt{phipm\_simul\_iom}:
\begin{itemize}
\item[(i)] 
Evaluate  $L_{c_2,\mathtt{V}}$ with $\mathtt{t}=c_2, \mathtt{V}=[0, v]$ to get $U_{n2}=u_n + L_{c_2,\mathtt{V}}$.

\item[(ii)] 
Evaluate  $L_{c_4,\mathtt{V}}$ and $L_{c_3,\mathtt{V}}$ simultaneously using  $\mathtt{t}=[c_4, c_3], \mathtt{V}=[0, v, d_2/c_2]$ to get both $U_{n3}=u_n + L_{c_3,\mathtt{V}}$ and $U_{n4}=u_n + L_{c_4,\mathtt{V}}$.

\item[(iii)] 
Evaluate  $L_{c_5,\mathtt{V}}$,  $L_{c_6,\mathtt{V}}$, and $L_{c_7,\mathtt{V}}$ simultaneously  using 
$\mathtt{t}=[c_7, c_6, c_5]$,\\ 
$\mathtt{V}=[0, v, \tfrac{c_4 }{c_3 (c_4-c_3)}d_{3}  +\tfrac{c_3 }{c_4 (c_3-c_4)} d_{4},  \tfrac{2}{c_3 (c_3-c_4)} d_{3} -\tfrac{2}{c_4 (c_3-c_4)} d_{4}]$ \\
to get  $U_{n5}=u_n + L_{c_5,\mathtt{V}}$, $U_{n6}=u_n + L_{c_6,\mathtt{V}}$, $U_{n7}=u_n + L_{c_7,\mathtt{V}}$.

\item[(iv)] 
Evaluate  $L_{c_8,\mathtt{V}}$,  $L_{c_9,\mathtt{V}}$, and $L_{c_{10},\mathtt{V}}$ simultaneously  using 
$\mathtt{t}=[c_9, c_{10}, c_8]$,\\ 
$\mathtt{V}=[0, v, \alpha_5 d_{5} +\alpha_6 d_{6}+\alpha_7 d_{7}, \beta_5 d_{5} -\beta_6 d_{6}-\beta_7 d_{7}, \gamma_5 d_{5} +\gamma_6 d_{6}+\gamma_7 d_{7} ]$ \\
to get  $U_{n8}=u_n + L_{c_8,\mathtt{V}}$, $U_{n9}=u_n + L_{c_9,\mathtt{V}}$, $U_{n10}=u_n + L_{c_{10},\mathtt{V}}$.

\item[(v)] 
Evaluate  $L_{1,\mathtt{V}}$ with 
$\mathtt{t}=1, \mathtt{V}=[0,v, \alpha_8 d_{8} + \alpha_9 d_{9} +\alpha_{10} d_{10}, 
\beta_8 d_{8} + \beta_9 d_{9} +\beta_{10} d_{10},
 \gamma_8 d_{8} + \gamma_9 d_{9} +\gamma_{10} d_{10} ]$ 
to get $u_{n+1}=u_n + L_{1,\mathtt{V}}$
(coefficients $\alpha_i, \beta_i, \gamma_i$ are given in \eqref{eq:coefficients}).  
\end{itemize}

\section{Numerical experiments}\label{sec:num}
In this section, 
we demonstrate the efficiency of our newly derived fourth- and fifth-order expRK time integration methods ($\mathtt{expRK4s6}$, $\mathtt{expRK5s10}$). Specifically, we will compare their performance against the existing methods of the same orders ($\mathtt{expRK4s5}$, $\mathtt{expRK5s8}$) on several examples of stiff PDEs.
All the numerical simulations  are performed in MATLAB on a single workstation, using an iMac 3.6 GHz Intel Core i7, 32 GB 2400 MHz DDR4.

          
\begin{example}\label{ex1}\rm
(\emph{A one-dimensional semilinear parabolic problem} \cite{HO05}): 
We first verify the order of convergence for the new derived fourth- and fifth-order expRK schemes ($\mathtt{expRK4s6}$, $\mathtt{expRK5s10}$) by considering the following PDE for $u(x,t)$,  $x\in [0,1], t\in [0,1]$, and subject to homogeneous Dirichlet boundary conditions,
\begin{equation} \label{example1}
\frac{\partial u(x,t)}{\partial t}- \frac{\partial^2 u(x,t)}{\partial x^2}   =\frac{1}{1+u^{2}(x,t)}+\Phi (x,t),
\end{equation}
whose exact solution is known to be $u(x,t)=x(1-x)\ee^t$ for a suitable choice of the source function $\Phi (x,t)$. 
\end{example}
\emph{Spatial discretization}:  For this example, we use standard second order finite differences with $200$ grid points. This leads to a very stiff system of the form  \eqref{eq1.1} (with $\|A\|_{\infty} \approx1.6 \times10^5$). 

The resulting system is then integrated on the time interval $[0, 1]$ 
using constant step sizes, corresponding to the number of time steps  $N=4, 8, 16, 32, 64$. 
The time integration errors at the final time $t=1$ are measured  in the maximum norm.

 
 In Figure~\ref{fig6.1}, we plot orders for all the employed integrators in the left diagram and the total CPU time versus the global errors in the right diagram. The left  diagram clearly shows a perfect agreement with our convergence result  in Theorem~\ref{th3.1}, meaning that the two new integrators $\mathtt{expRK4s6}$ and $\mathtt{expRK5s10}$ fully achieve orders 4 and 5, respectively. When compared to the old integrators of  the same orders $\mathtt{expRK4s5}$ and $\mathtt{expRK5s8}$, we note that, given the same number of time steps, $\mathtt{expRK4s6}$ is slightly more accurate but is much faster than $\mathtt{expRK4s5}$ (see the right diagram). In a similar manner,  $\mathtt{expRK5s10}$ gives almost identical global errors but is also much faster than  $\mathtt{expRK5s8}$. Finally, we observe that, for this example,  for a global error that is larger than $10^{-6}$, the new fourth-order method $\mathtt{expRK4s6}$ is the fastest one, and for more stringent errors,  $\mathtt{expRK5s10}$ is the fastest integrator.
\begin{figure}[H]
\centering
\begin{tabular}{cc}
\epsfig{file=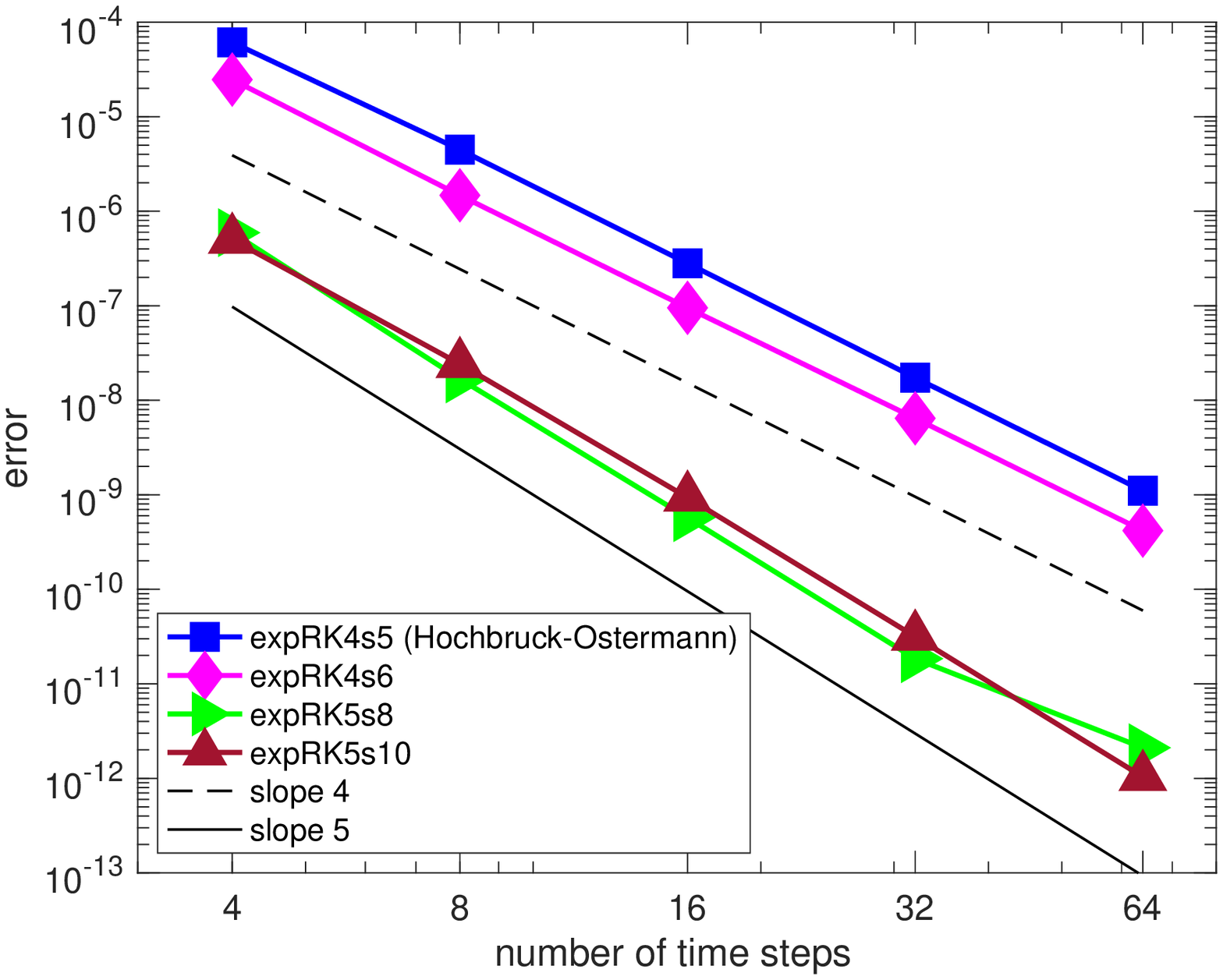,width=0.47\linewidth,clip=}
\hspace{2mm}
\epsfig{file=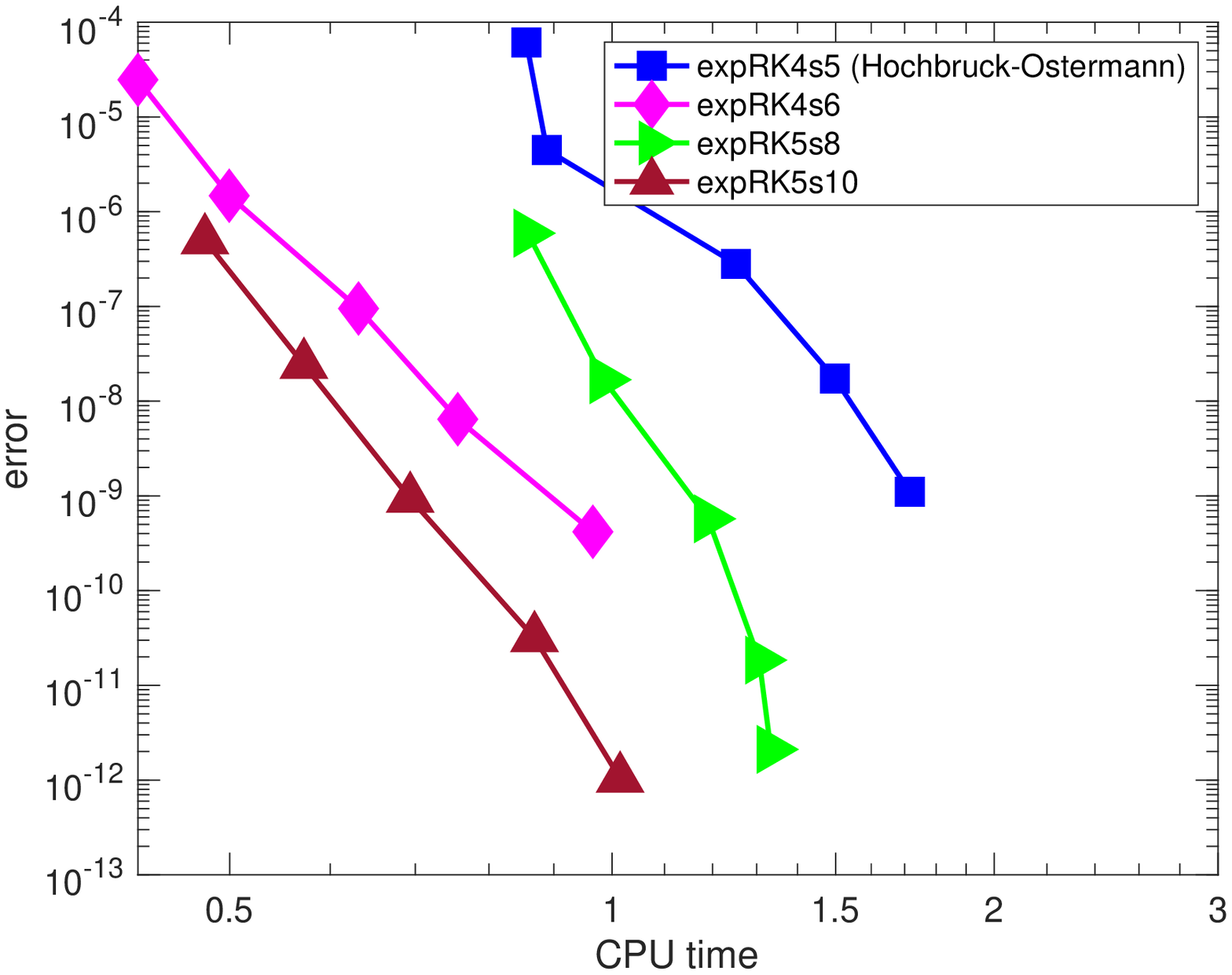,width=0.47\linewidth,clip=}  
\end{tabular}
\caption{\label{fig6.1} Order plots (left) and total CPU times (right) of $\mathtt{expRK4s5}$, $\mathtt{expRK4s6}$ , $\mathtt{expRK5s8}$, and $\mathtt{expRK5s10}$ when applied to \eqref{example1}. The global errors at time $t=1$ are plotted as functions of the number of time steps (left) and the total CPU time in second (right). For comparison, straight lines with slopes 4 and 5 are added.}
\end{figure}

\begin{example}\label{ex2}\rm
(\emph{A nonlinear Schrödinger equation \cite{Cazenave1989,Berland2005}}): 
We consider the following one-dimensional nonlinear Schrödinger (NLS) equation with periodic boundary conditions 
\vspace{-2mm}
\begin{equation} \label{example2}
\begin{aligned}
\mathtt{i} \frac{\partial \Psi(x,t)}{\partial t}   & = - \frac{\partial^2 \Psi(x,t)}{\partial x^2} + \big(V(x)+ \lambda |\Psi(x,t)|^2\big)\Psi(x,t), \\
 \Psi(-\pi,t) &= \Psi(\pi,t), \q t \ge 0\\
\Psi(0,t) &=\Psi_0 (x),   \q x \in [-\pi, \pi]
\end{aligned}
\end{equation}
where the potential function $V(x)=\dfrac{1}{1+\sin^2 (x)}$, the initial condition $\Psi_0 (x)=\ee^{\sin(2x)}$, and the constant
 $\lambda =1$  (see  \cite{Berland2005}).
\end{example}
\emph{Spatial discretization}: For this example, we use a discrete Fourier transform $\mathcal{F}$  with $ND=128$ modes, leading to a  mildly stiff  system of the form  \eqref{eq1.1} with
\begin{equation} \label{eq6.3}
\begin{aligned}
A&= \text{diag}(-\mathtt{i} k^2),\  k= -\frac{ND}{2}+1, \cdots, \frac{ND}{2}= -63, \cdots, 64 \\
g(t,u)&=-\mathtt{i} \mathcal{F}((V(x)+ \lambda |\mathcal{F}^{-1}(u)|^2)\mathcal{F}^{-1}(u).
\end{aligned}
\end{equation}
Next, we integrate this system on the time interval $[0, 3]$ with constant step sizes, corresponding to the number of time steps  
$N= 64,         128$,        $ 256 ,        512,        1024$.
 Since the exact solution  $ \Psi(x,t)$ of  \eqref{example2} is unknown,  a reliable reference solution is computed by the stiff solver $\mathtt{ode15s}$ with $ATOL=RTOL=10^{-14}$. Again, the time integration errors are  measured in a discrete maximum norm at the final time $t= 3$.

As seen from the  two double-logarithmic diagrams in Figure~\ref{fig6.2}, we plot the accuracy of the four employed integrators ($\mathtt{expRK4s5}$, $\mathtt{expRK4s6}$ , $\mathtt{expRK5s8}$, and $\mathtt{expRK5s10}$) as functions of the number of time steps (left) and the total CPU time (right). The left  digram clearly indicates that the two new integrators $\mathtt{expRK4s6}$ and $\mathtt{expRK5s10}$ achieve their corresponding expected orders 4 and 5. While $\mathtt{expRK5s10}$ is a little more accurate than $\mathtt{expRK5s8}$,  $\mathtt{expRK4s6}$ is much more accurate than $\mathtt{expRK4s5}$ for a given same number of time steps, meaning that it can take much larger time steps while achieving the same accuracy. Moreover,  the right precision digram displays the efficiency plot indicating that both $\mathtt{expRK4s6}$ and $\mathtt{expRK5s10}$ are much faster than their counterparts $\mathtt{expRK4s5}$ and $\mathtt{expRK5s8}$, respectively.
More specifically,  a similar story is observed: for lower accuracy requirements, say error $\sim 10^{-7}$,  the new fourth-order method $\mathtt{expRK4s6}$  is the most efficient, whereas for error  $\sim 10^{-8}$ or tighter the new fifth-order method $\mathtt{expRK5s10}$  is the most efficient. 


\begin{figure}[H]
\centering
\begin{tabular}{cc}
\epsfig{file=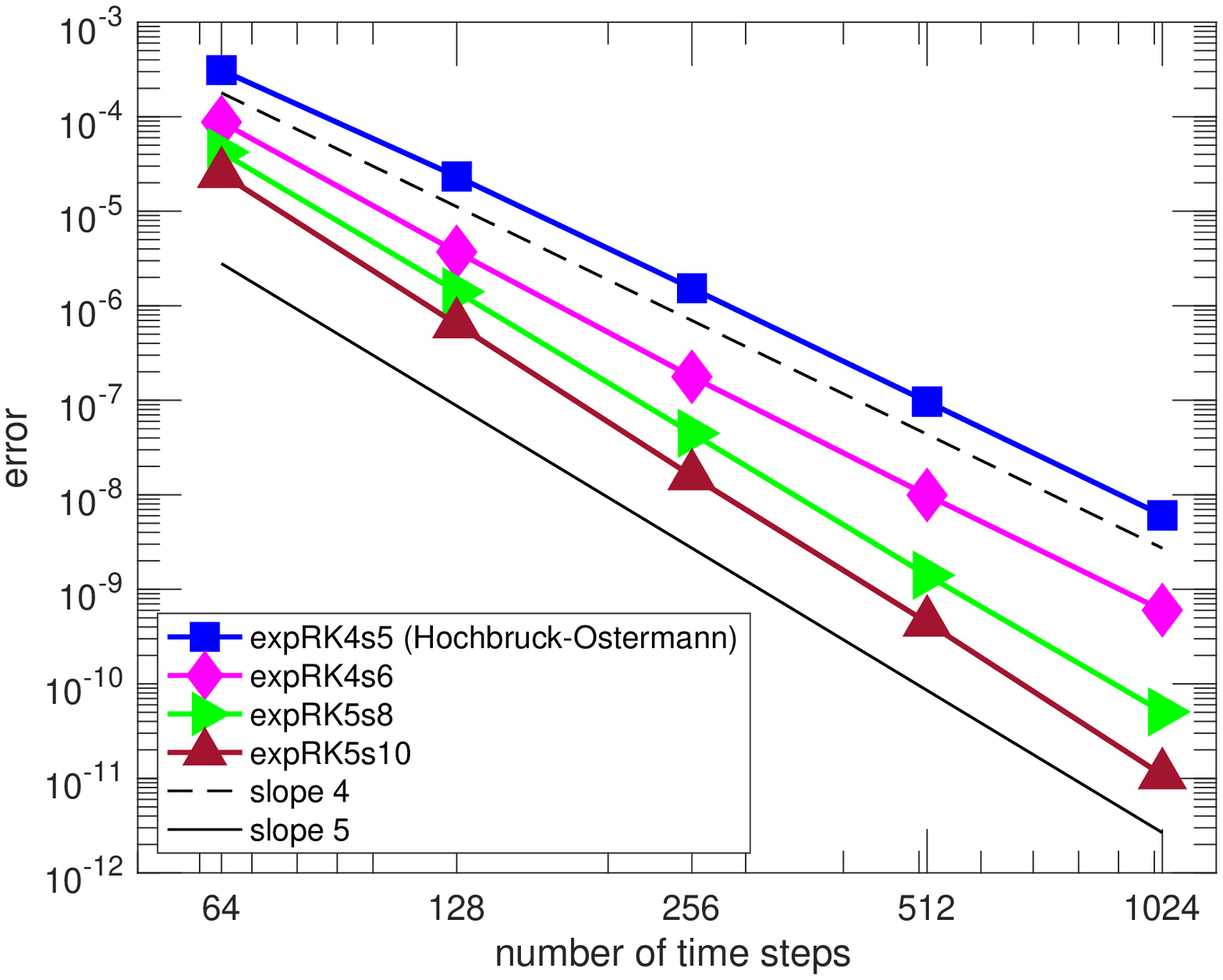,width=0.47\linewidth,clip=}
\hspace{2mm}
\epsfig{file=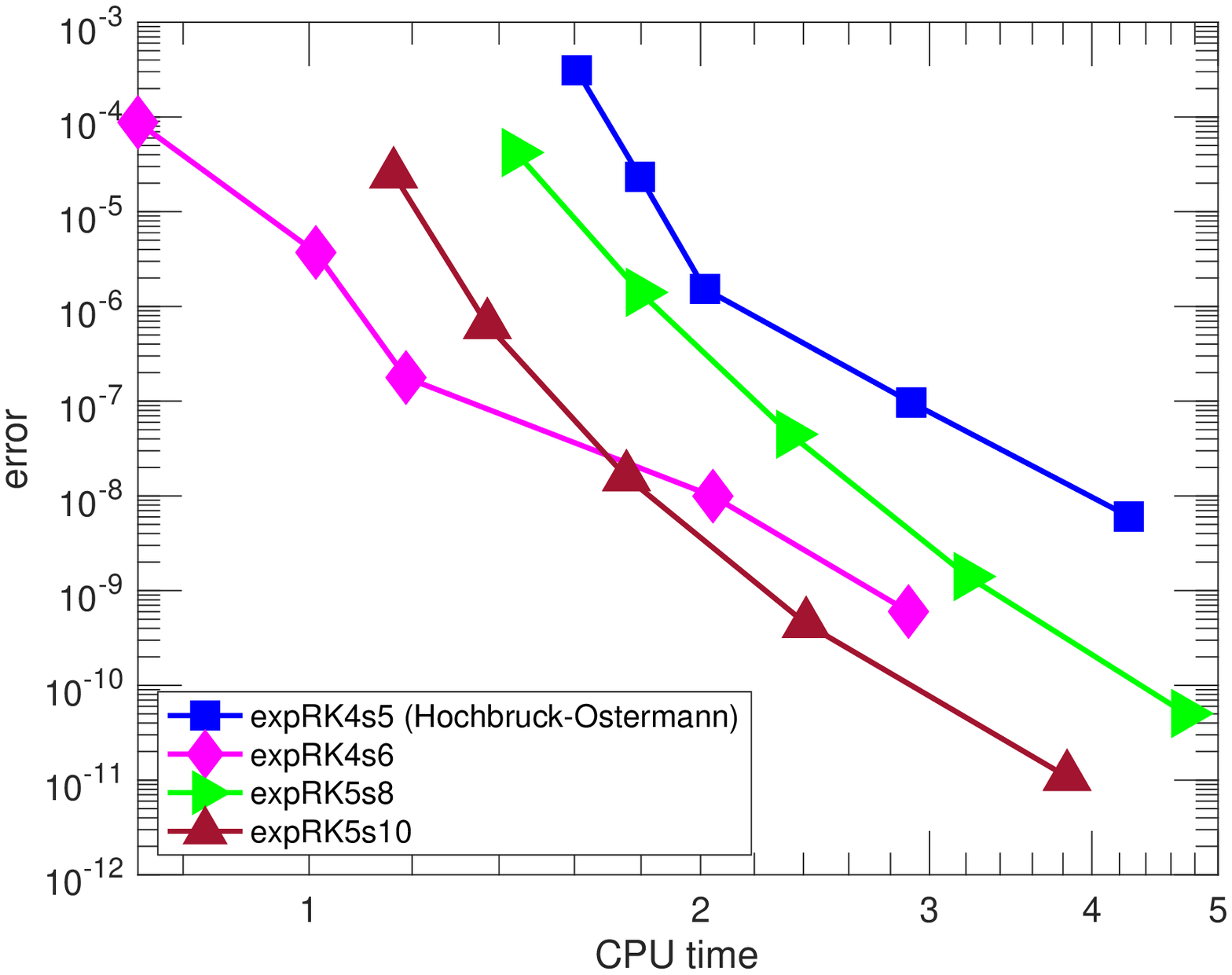,width=0.47\linewidth,clip=}  
\end{tabular}
\caption{\label{fig6.2} Order plots (left) and total CPU times (right) of $\mathtt{expRK4s5}$, $\mathtt{expRK4s6}$ , $\mathtt{expRK5s8}$, and $\mathtt{expRK5s10}$ when applied to Example~\ref{ex2}. The errors at time $t=3$ are plotted as functions of the number of time steps (left) and the total CPU time in second (right). For comparison, straight lines with slopes 4 and 5 are added.}
\end{figure}

\begin{example}\label{ex3}\rm
(\emph{A 2D  Gray--Scott model \cite{Gray1984,Berland2007}}): 
Consider the following two-dimensional reaction-diffusion equation--the Gray--Scott equation model, for $u = u(x, y, t ),  \ v= v(x, y, t)$  on the square $\Omega=[0, L]^2$, (here, we choose $L=1.5$) subject to periodic boundary conditions
\begin{equation} \label{example3}
\begin{aligned}
\frac{\partial u}{\partial t}& =d_u \Delta u - uv^2+ \alpha(1-u),\\
\frac{\partial v}{\partial t}&= d_v \Delta v +uv^2 -(\alpha+\beta)v,
\end{aligned}
\end{equation}
where $\Delta$ is the Laplacian operator, the diffusion coefficients  $d_u=0.02, \ d_v=0.01$, and the bifurcation parameters $\alpha=0.065,\ \beta=0.035$. The initial conditions are Gaussian pulses 
\[
u(x,y,0)=1-e^{-150\big((x-L)^2+ (y-L)^2\big)}, \ v(x,y,0)=e^{-150\big((x-L)^2+ 2(y-L)^2\big)}.
\]
\end{example}
\emph{Spatial discretization}: For this example, we use standard second order finite differences using 150 grid points in each direction with mesh width $\Delta x = \Delta y = L/150$. This gives a stiff system of the form  \eqref{eq1.1}.

The system is then solved on the time interval $[0, 2]$ using constant step sizes.
In the absence of an analytical solution of \eqref{example3},  a high-accuracy reference solution is computed using the $\mathtt{expRK4s6}$ method with a sufficient small time step.  Errors are  measured in a discrete maximum norm at the final time $t= 2$.

In Figure~\ref{fig6.3},  using the same number of time steps   $N=32,  64,         128$,        $ 256 ,        512,        1024$, we again display the order plots of the taken integrators. One can see  that $\mathtt{expRK4s6}$ is much more accurate than $\mathtt{expRK4s5}$ and  $\mathtt{expRK5s10}$ is slightly more accurate than $\mathtt{expRK5s8}$. 

\begin{figure}[H]
\centering
\epsfig{file=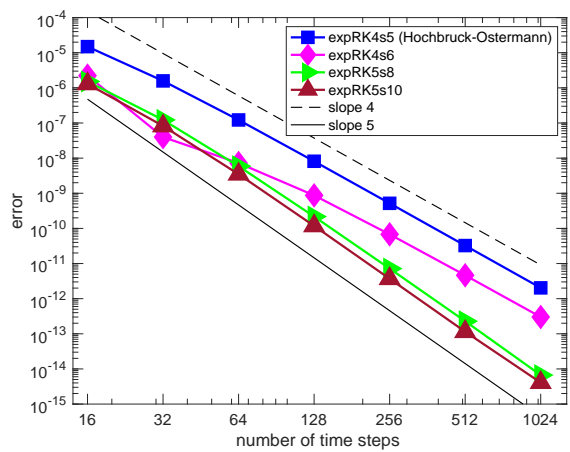,width=0.6\linewidth,clip=}
\caption{\label{fig6.3} Order plots of $\mathtt{expRK4s5}$, $\mathtt{expRK4s6}$ , $\mathtt{expRK5s8}$, and $\mathtt{expRK5s10}$ when applied to Example~\ref{ex3}. The errors at time $t=2$ are plotted as functions of the number of time steps. For comparison, straight lines with slopes 4 and 5 are added.}
\vspace{-2mm}
\end{figure}
In Figure~\ref{fig6.4}, we display the efficiency plot for which the time step sizes were chosen for each integrator to obtain about  same error thresholds $10^{-i}, \ i= 5,\cdots, 11$
(The corresponding number of time steps for each integrator are displayed in  Table~\ref{table1}. As seen, given about the same level of accuracy, the new methods use smaller steps than the old ones of the same order, meaning that they can take larger step sizes).
Again, $\mathtt{expRK4s6}$ is much faster than $\mathtt{expRK4s5}$ and it is interesting that this new fourth-order method turns out to be the most efficient (although for error thresholds tighter than $10^{-11}$ the new fifth-order method $\mathtt{expRK5s10}$ seems to become the most efficient).

\begin{figure}[H]
\centering
\epsfig{file=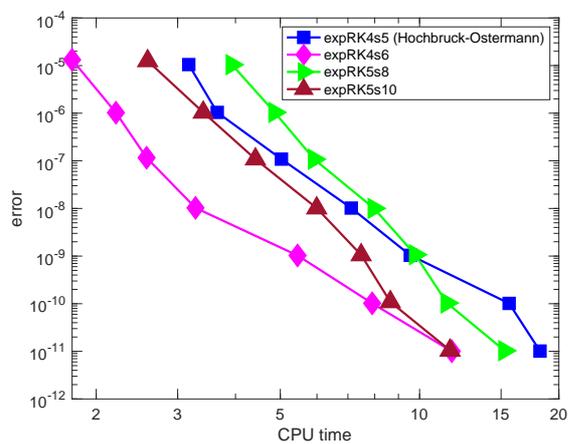,width=0.6\linewidth,clip=}  
\caption{\label{fig6.4} Total CPU times of $\mathtt{expRK4s5}$, $\mathtt{expRK4s6}$ , 
$\mathtt{expRK5s8}$, and $\mathtt{expRK5s10}$ when applied to Example~\ref{ex3}. 
The time step sizes were chosen in such a way that each integrator achieves about the same error thresholds $10^{-i}, \ i= 5,\cdots, 11$.
The errors at time $t=2$ are plotted as functions of the total CPU time (in second).}
\vspace{-8mm}
\end{figure}
\setlength{\extrarowheight}{2 pt}
\begin{table}[H]
\centering
\begin{tabular}{ |c|c|c|c|c|c|c|c| }
\hline
\multirow{2}{*}{Method} &\multicolumn{7}{c|}{Error threshold vs. Number of time steps} \\
 \cline{2-8}
&$10^{-5}$ & $10^{-6}$ &$10^{-7}$ &$10^{-8}$&$10^{-9}$&$10^{-10}$&$10^{-11}$ \\
\hline
 $\mathtt{expRK4s5}$  & 18 & 36 & 66 & 121& 215& 385& 685 \\
$\mathtt{expRK4s6}$ &10&19& 28& 46& 122& 230& 420 \\
 $\mathtt{expRK5s8}$ &7&  18&33& 57&  92& 149& 238   \\
 $\mathtt{expRK5s10}$  &8&  17& 30& 51&  82& 130&208 \\
\hline
\end{tabular}
\caption{\label{table1} The number of time steps taken to achieve about the same error thresholds $10^{-i}, \ i= 5,\cdots, 11$.}
\vspace{-2mm}
 \end{table}

The numerical results presented on the three examples above clearly confirm  the advantage of constructing parallel stages expRK methods  based on Theorem~\ref{th3.1}, leading to more efficient and accurate methods $\mathtt{expRK4s6}$ and $\mathtt{expRK5s10}$.
 \begin{acknowledgements}
The author would like to thank Reviewer~1 for  the valuable comments and helpful suggestions.
He would like also to thank the National Science Foundation, which supported this research under award NSF DMS--2012022.
 \end{acknowledgements}

\bibliographystyle{spmpsci}      
\bibliography{references} 

\begin{thebibliography}{10}
\providecommand{\url}[1]{{#1}}
\providecommand{\urlprefix}{URL }
\expandafter\ifx\csname urlstyle\endcsname\relax
  \providecommand{\doi}[1]{DOI~\discretionary{}{}{}#1}\else
  \providecommand{\doi}{DOI~\discretionary{}{}{}\begingroup
  \urlstyle{rm}\Url}\fi

\bibitem{AH11}
Al-Mohy, A.H., Higham, N.J.: Computing the action of the matrix exponential,
  with an application to exponential integrators.
\newblock SIAM J. Sci. Comput. \textbf{33}, 488--511 (2011)

\bibitem{Berland2005}
Berland, H., Skaflestad, B.: Solving the nonlinear {S}chr{\"o}dinger equation
  using exponential integrators.
\newblock Tech. rep. (2005)

\bibitem{Berland2007}
Berland, H., Skaflestad, B., Wright, W.M.: Expint---a matlab package for
  exponential integrators.
\newblock ACM Transactions on Mathematical Software (TOMS) \textbf{33}(1),
  4--es (2007)

\bibitem{CKOS16}
Caliari, M., Kandolf, P., Ostermann, A., Rainer, S.: The {L}eja method
  revisited: Backward error analysis for the matrix exponential.
\newblock SIAM J. Sci. Comp. \textbf{38}(3), A1639--A1661 (2016)

\bibitem{Cazenave1989}
Cazenave, T.: An introduction to nonlinear Schr{\"o}dinger equations, vol.~22.
\newblock Universidade Federal do Rio de Janeiro, Centro de Ci{\^e}ncias
  Matem{\'a}ticas e da~… (1989)

\bibitem{GaudreaultPudykiewicz16}
Gaudreault, S., Pudykiewicz, J.: An efficient exponential time integration
  method for the numerical solution of the shallow water equations on the
  sphere.
\newblock J. Comput. Phys. \textbf{322}, 827--848 (2016)

\bibitem{Gray1984}
Gray, P., Scott, S.: Autocatalytic reactions in the isothermal, continuous
  stirred tank reactor: Oscillations and instabilities in the system {$A+ 2B
  \rightarrow 3B; B\rightarrow C$}.
\newblock Chemical Engineering Science \textbf{39}(6), 1087--1097 (1984)

\bibitem{HO05}
Hochbruck, M., Ostermann, A.: Explicit exponential {R}unge--{K}utta methods for
  semilinear parabolic problems.
\newblock SIAM J. Numer. Anal. \textbf{43}, 1069--1090 (2005)

\bibitem{HO10}
Hochbruck, M., Ostermann, A.: Exponential integrators.
\newblock Acta Numerica \textbf{19}, 209--286 (2010)

\bibitem{Ju2017}
Ju, L., Wang, Z.: Exponential time differencing {G}auge method for
  incompressible viscous flows.
\newblock Communications in Computational Physics \textbf{22}(2), 517--541
  (2017)

\bibitem{Luan2014}
Luan, V.T.: High-order exponential integrators.
\newblock Ph.D. thesis, University of Innsbruck (2014)

\bibitem{L17}
Luan, V.T.: Fourth-order two-stage explicit exponential integrators for
  time-dependent {PDE}s.
\newblock Applied Numerical Mathematics \textbf{112}, 91--103 (2017)

\bibitem{LD18}
Luan, V.T., Michels, D.: Explicit exponential {R}osenbrock methods and their
  application in visual computing.
\newblock (Revised)  (2020)

\bibitem{LO13}
Luan, V.T., Ostermann, A.: Exponential {B}-series: The stiff case.
\newblock SIAM J. Numer. Anal. \textbf{51}, 3431--3445 (2013)

\bibitem{LO14b}
Luan, V.T., Ostermann, A.: Explicit exponential {R}unge--{K}utta methods of
  high order for parabolic problems.
\newblock J. Comput. Appl. Math. \textbf{256}, 168--179 (2014)

\bibitem{LO14a}
Luan, V.T., Ostermann, A.: Exponential {R}osenbrock methods of order
  five--construction, analysis and numerical comparisons.
\newblock J. Comput. Appl. Math. \textbf{255}, 417--431 (2014)

\bibitem{LO12b}
Luan, V.T., Ostermann, A.: Stiff order conditions for exponential
  {R}unge--{K}utta methods of order five.
\newblock In: H.B. et~al. (ed.) Modeling, Simulation and Optimization of
  Complex Processes - HPSC 2012, pp. 133--143. Springer (2014)

\bibitem{LO16}
Luan, V.T., Ostermann, A.: Parallel exponential {R}osenbrock methods.
\newblock Comput. Math. Appl. \textbf{71}, 1137--1150 (2016)

\bibitem{Luan18}
Luan, V.T., Pudykiewicz, J.A., Reynolds, D.R.: Further development of efficient
  and accurate time integration schemes for meteorological models.
\newblock J.~Comput.~Phys. \textbf{376}, 817--837 (2019)

\bibitem{Michels2017}
Michels, D.L., Luan, V.T., Tokman, M.: A stiffly accurate integrator for
  elastodynamic problems.
\newblock ACM Transactions on Graphics (TOG) \textbf{36}(4), 116 (2017)

\bibitem{NW12}
Niesen, J., Wright, W.M.: Algorithm 919: A {K}rylov subspace algorithm for
  evaluating the $\varphi$-functions appearing in exponential integrators.
\newblock ACM Trans. Math. Soft. (TOMS) \textbf{38}(3), 22 (2012)

\bibitem{Pieper2019a}
Pieper, K., Sockwell, K.C., Gunzburger, M.: Exponential time differencing for
  mimetic multilayer ocean models.
\newblock J.~Comput.~Phys. \textbf{398}, 817--837 (2019)

\end{thebibliography}

\end{document}